\newtheorem*{thm*}{Theorem}
\newtheorem{lemma}{Lemma}[section]
\newtheorem{thm}[lemma]{Theorem}
\newtheorem{cor}[lemma]{Corollary}
\newtheorem{cond}[lemma]{Condition}
\theoremstyle{definition}
\DeclareMathOperator\E{E}
\DeclareMathOperator\G{G}
\DeclareMathOperator\GL{GL}
\DeclareMathOperator\Sp{Sp}
\newcommand\m{\mathfrak m}
\newcommand\N{\mathbb N}
\newcommand\Z{\mathbb Z}
\newcommand\blank{\mathord{\hbox to 1.5ex{\hrulefill}}\,}
\let\eps\varepsilon
\newcommand\supb[1]{\mathchoice
    {^{\fboxsep1pt\fbox{$\scriptstyle \mskip1mu #1\mskip-1mu$}}}%
    {^{\fboxsep1pt\fbox{$\scriptstyle \mskip1mu #1\mskip-1mu$}}}%
    {^{\fboxsep1pt\fbox{$\scriptscriptstyle \mskip.7mu #1\mskip-.7mu$}}}%
    {^{\fboxsep1pt\fbox{$\scriptscriptstyle \mskip.7mu #1\mskip-.7mu$}}}%
}
\title{Local-global principle for congruence subgroups of Chevalley groups}
\author{Himanee Apte}
\email{himanee200@gmail.com}
\author{Alexei Stepanov}
\email{stepanov239@gmail.com}
\address
{Department of Mathematics and Mechanics,
Sankt-Petersburg State University\newline
and\newline\hspace*\parindent
Abdus Salam School of Mathematical Sciences at
GC University, Lahore, Pakistan}
\thanks
{The article is written in the framework of Russian--Indian cooperation project RFBR~10-01-92651.
The second-named author gratefully acknowledges support of RFBR grant N. 11-01-00811-a and
State Financed research task 6.38.74.2011 at the St.Petersburg State University.}
\date{}
\begin{document}
\begin{abstract}
In this article we prove Suslin's local-global principle for principal congruence
subgroups of Chevalley groups. Let $\G(\Phi,\blank)$ be a Chevalley--Demazure group scheme
with a root system $\Phi\ne A_1$ and $E(\Phi,\blank)$ its elementary subgroup.
Let $R$ be a ring and $I$ an ideal of $R$. Assume additionally that $R$ has no residue
fields of $2$ elements if $\Phi=C_2$ or $G_2$.

\begin{thm*}
Let $g\in\G(\Phi,R[X],XR[X])$. Suppose that for every maximal ideal $\m$ of $R$
the image of $g$ under the localization homomorphism at $\m$
belongs to $\E(\Phi,R_\m[X],IR_\m[X])$. Then, $g\in\E(\Phi,R[X],IR[X])$.
\end{thm*}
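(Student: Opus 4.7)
The plan is to deduce the theorem from a Suslin-type ``dilation lemma'' by patching local elementary factorizations on a finite covering of $\Max R$ and then telescoping a one-parameter family of substitutions.

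\textbf{Dilation step.} The central auxiliary statement I would establish is: if $g\in\G(\Phi,R[X],XR[X])$ has image in $\G(\Phi,R_s[X])$ lying in $\E(\Phi,R_s[X],IR_s[X])$ for some $s\in R$, then there is an $N\in\N$ such that
\[
g(aX)g(bX)^{-1}\in\E(\Phi,R[X],IR[X])\qquad\text{whenever }a-b\in s^NR.
\]
To prove it, I would express the localized $g$ as a product of generators of the relative elementary subgroup over $R_s[X]$---root unipotents $x_\alpha(\xi)$ with $\xi\in IR_s[X]$, conjugated by elements of $\E(\Phi,R_s[X])$---then replace $X$ by $s^N X$ for $N$ sufficiently large to clear all denominators, and finally apply the Chevalley commutator identities, together with the residue-field hypothesis when $\Phi=C_2$ or $G_2$, to rewrite the outcome as a product of generators of $\E(\Phi,R[X],IR[X])$.

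\textbf{Finite cover and telescoping.} By the hypothesis of the theorem, for every $\m\in\Max R$ we have $g\in\E(\Phi,R_\m[X],IR_\m[X])$, so the dilation step yields $s_\m\notin\m$ and $N_\m\in\N$ with the dilation property. Since each $s_\m\notin\m$, the elements $s_\m^{N_\m}$ are contained in no maximal ideal and therefore generate $R$; finitely many of them, $s_1^{N_1},\dots,s_k^{N_k}$, already suffice, and we write $1=\sum_{i=1}^k a_is_i^{N_i}$. Set $b_i=a_is_i^{N_i}$, so that $b_1+\cdots+b_k=1$ with $b_i\in s_i^{N_i}R$, let $c_0=0$ and $c_i=b_1+\cdots+b_i$. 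Using $g(0)=1$ we get the telescoping identity
\[
g(X)=g(c_kX)=\prod_{i=1}^{k}g(c_iX)g(c_{i-1}X)^{-1}
\]
in $\G(\Phi,R[X],XR[X])$. The $i$-th factor has $a=c_i$, $b=c_{i-1}$ and $a-b=b_i\in s_i^{N_i}R$, so by the dilation step it lies in $\E(\Phi,R[X],IR[X])$, whence so does $g(X)$.

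\textbf{Main obstacle.} The entire difficulty is concentrated in the dilation step. Because the relative elementary subgroup is defined as the $\E(\Phi,R[X])$-normal closure of the $IR[X]$-root unipotents, the local factorization over $R_s[X]$ inevitably involves two kinds of factors: ``ideal'' generators $x_\alpha(\xi)$ with $\xi\in IR_s[X]$ and the conjugating ``absolute'' elements from $\E(\Phi,R_s[X])$. Clearing denominators in both kinds simultaneously, while keeping the result in the \emph{relative} subgroup rather than merely in the absolute $\E(\Phi,R[X])$, requires delicate manipulations with the Chevalley commutator formulas for relative elementary subgroups---presumably prepared in the earlier sections of the paper---and this is also where the residue-field restriction for $\Phi=C_2,G_2$ becomes unavoidable.
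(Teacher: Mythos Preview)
Your proposal is correct and follows the classical Suslin strategy, which is essentially the paper's approach as well. The only difference is expository: where you telescope directly with concrete partial sums $c_i=b_1+\cdots+b_i$ and a two-point dilation lemma, the paper introduces a ``generic ring'' $A=R[X,t_1,\dots,t_m]/(t_1+\cdots+t_m-1)$, telescopes with the formal partial sums $\sum_{i\le k}t_i$, applies its one-variable Relative Dilation Principle (Theorem~\ref{RelDilation}, equivalent to your two-point version) to each factor viewed as a function of $t_k$, and then specializes $t_k\mapsto s_k^{l_k}p_k$; the authors themselves remark that this repackaging is ``a matter of taste''. Your identification of the dilation step as the real obstacle is exactly right---Sections~\ref{splitting}--\ref{dilation} of the paper are devoted to it.
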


The theorem is a common generalization of the result of E.Abe for the absolute case ($I=R$) and
H.Apte--P.Chattopadhyay--R.Rao for classical groups. It is worth mentioning that for the
absolute case the local-global principle was obtained by V.Petrov and A.Stavrova in more
general settings of isotropic reductive groups.
\end{abstract}
\maketitle

\section*{Introduction}

In this article we prove Suslin's local-global principle (LGP) for principal congruence
subgroups of Chevalley groups. The LGP in the context of lower
nonstable K-theory was introduced by Suslin~\cite{SuslinSerreConj} and Quillen~\cite{QuillenSerreConj}
for their solution of Serre's problem on projective modules over polynomial rings.
The localization techniques were developed further in works of
Bak~\cite{BakNonabelian}, Taddei~\cite{Taddei} and Vaserstein~\cite{VasersteinChevalley, VasersteinGLn}
among others. In the last decade a substantial progress in developing and applying
localization methods was made by Bak, Hazrat, Vavilov, Zhang Zuhong, and the second named
author~\cite{StepVavLength, BHVstrike, HSVZyoga, HVZrelachev}.
These papers utilize Bak's version of localization procedure where a key lemma is continuity
of conjugation in $s$-adic topology (Bak's key lemma).

Originally Suslin proved the LGP for the special linear group.
Later it was extended for orthogonal groups by Suslin--Kopejko~\cite{SusKopLGPSO}
and for symplectic groups by Grunewald--Mennicke--Vaserstein~\cite{GMVLGPSp}.
For all Chevalley groups the LGP was obtained by Abe~\cite{AbeLGP} with
an extra condition (condition (P) is used in the proof of Lemma~1.11, although it is
missing in the statement). For isotropic reductive groups the LGP was obtained by
Petrov and Stavrova~\cite{PetrovStavrovaIsotropic}.
The relative version of LGP in classical groups was worked out by the first-named author in cooperation
with Rao and Chattopadhyay~\cite{ApteChattRao}, where it was shown to be an important tool for the study
the group structure on orbit sets of unimodular rows. Rao and his group also found several new
applications of the LGP~\cite{ApteChattRao, ChattRao, BasuChattRao, BasuRao, RaoBasuJose, JoseRao}.

The LGP\footnote{In this article we deal with Suslin's version of the LGP, that serves for computing~$K_1$,
unlike Quillen's LGP concerned with $K_0$.}
was first applied for proving $K_1$-analogue of Serre's conjecture. The history of this statement
for classical groups is same as for the LGP itself. For all Chevalley groups a more general statement,
$\mathbb A^1$-homotopy invariance of nonstable $K_1$, was recently obtained by Wendt~\cite{WendtA1},
although  the proof is incomplete. In the more general settings of isotropic
reductive groups the homotopy invariance was independently proved by Stavrova~\cite{StavSerreConj}.

The key place of the proof of the LGP is the dilation principle (Theorem~\ref{RelDilation}).
It seems that the computations performed in section~\ref{CommCalc} of the current article are
absolutely necessary for the proof of the dilation principle. These computations were missing
in Abe's paper~\cite{AbeLGP}; he used extra conditions (P) and (P') instead.
Almost the same computations are needed for the proof of Bak's key lemma mentioned above.
The dilation principle as well as Bak's key lemma implies normality of the elementary subgroup
almost immediately. In fact, using ideas of preprint~\cite{StepUniloc} by the second-named author,
one can prove standard commutator formulas~\cite{VasersteinChevalley},
nilpotent structure of $K_1$~\cite{HazVav}, bounded
commutator length~\cite{StepVavLength}, and multiple commutator formulas~\cite{HazZhangMulti}
without further computations with group elements.

For the latter problem the relative
dilation principle obtained in the current article substantially simplifies
the proof. Actually, the dilation principle immediately implies Bak's key
lemma which was used in~\cite{HazZhangMulti} to get the result. On the other hand,
the commutator calculus of~\cite{HSVZyoga, HVZrelachev} provide formulas
which are not easy to get from the dilation principle.

Our proof does not depend on other results on the LGP in Chevalley groups.
The absolute case does not use normality of the elementary subgroup,
but for the relative case we apply standard commutator formulas in
section~\ref{CommForm}. We write all details in the proofs, although some
of them are obvious for specialists.

Those who want to focus on the absolute case
can set $I=A$ (or $I=R$ in the last section) and skip
sections~\ref{splitting} and \ref{CommForm}. Moreover, the case $\Phi=C_l$ is
proved by Grunewald--Mennicke--Vaserstein~\cite{GMVLGPSp}. Therefore, section~\ref{SpSection}
can also be omitted for the absolute case modulo this paper.

\section{Notation and preliminaries}\label{notation}

All rings in the article are assumed to be commutative with unity
and all ring homomorphisms preserve the unity.
Let $J$ be an ideal of a ring $A$. By $J^m$ we denote the product of
$J$ with itself $m$ times, i.\,e. $J^m$ is an additive subgroup generated
by all products $r_1\dots r_m$, where $r_1,\dots,r_m\in J$.

For a multiplicative subset $S$ of $A$ we denote by $S^{-1}A$
the localization of $A$ at $S$. The localization homomorphism $A\to S^{-1}A$
is denoted by $\lambda_S$.
In the current article we use only two kinds of localizations.
\begin{itemize}
\item
Principal localizations, where $S=\{s,s^2,\dots\}$ for some element $s\in A$.
Principal localization is denoted by $A_s$ and the corresponding localization
homomorphism by $\lambda_s$.
\item
Localizations at a maximal ideal. This means that $S=A\setminus\m$ for some maximal ideal
$\m$ of $A$. In this case localization is denoted by $A_\m$ and the localization
homomorphism by $\lambda_\m$.
\end{itemize}

Let $G$ be an arbitrary group. We follow standard group-theoretical notation:
\begin{itemize}
\item
if $a,b\in G$, then $[a,b]=a^{-1}b^{-1}ab$ denotes their commutator whereas
$a^b=b^{-1}ab$ stands for the conjugate to $a$ by $b$;
\item
if $F$ and $H$ are subsets of $G$, then $[F,H]$ is the mixed commutator subgroup, i.\,e. the
subgroup generated by commutators $[a,b]$ for all $a\in F$ and $b\in H$;
\item
the double commutators, i.\,e. $[F,H,K]:=\bigl[[F,H],K]$, where $F,H,K$ can be understood as
subsets or elements of $G$.
\end{itemize}

Several times we apply the 3 subgroup lemma which asserts that
\begin{equation}\label{3sbg}
[F,H,K]\le [K,F,H]\cdot [H,K,F]
\end{equation}
if $F,H,K$ are normal subgroups of $G$.

Let $\G(\Phi,\blank)$ denote a Chevalley-Demazure group scheme with a root
system~$\Phi$. We suppress the weight lattice from the notation. The point is that
we use only standard commutator formulas, which are known for all weight lattices
and elementary computations, i.\,e. computations in the Steinberg group.

Let $A$ be a ring. \textit{The following condition on $\Phi$ and $A$
is used throughout the article.}

\begin{cond}\label{cond}
$\Phi\ne A_1$. If $\Phi=C_2$ or $G_2$, then $A$ has no residue fields of\/ $2$ elements.
\end{cond}

Let $I$ be an ideal of~$A$.
The principal congruence subgroup $\G(\Phi,A,I)$ is the kernel of the reduction homomorphism
$\G(\Phi,A)\to\G(\Phi,A/I)$ modulo $I$.
By $\E(\Phi,I)$ we denote the subgroup of $\G(\Phi,A,I)$, generated by all root unipotent elements $x_\alpha(r)$
of level $I$ (which means that $r\in I$). If $J$ is another ideal of $I$, then
$\E(\Phi,J,IJ)$ denotes the normal closure of $\E(\Phi,IJ)$ in $\E(\Phi,J)$.
In particular, for $J=A$ we obtain the usual definition of the relative elementary subgroup
$\E(\Phi,A,I)$ of level~$I$.

The mixed commutator subgroup $[\E(\Phi,A,I),\E(\Phi,A,J)]$  plays an essential role in our
considerations. Since this subgroup appears very often, we introduce a special notation for it.

\begin{equation}\label{EIJ}
E_{I,J}=E_{I,J}(\Phi,A)=[\E(\Phi,A,I),\E(\Phi,A,J)]
\end{equation}

If $J=A$ and condition~\ref{cond} in satisfied, then the (absolute) standard commutator formulas
\begin{equation}\label{absolute}
[\G(\Phi,A,I),\E(\Phi,A)]=[\E(\Phi,A,I),\G(\Phi,A)]=[\E(\Phi,A,I),\E(\Phi,A)]=\E(\Phi,A,I)
\end{equation}
hold. They were obtained by Taddei~\cite{Taddei} and Vaserstein~\cite{VasersteinChevalley}.
The situation with relative standard commutator formula is more subtle. The inclusion
\begin{equation}\label{RelInclusion}
E_{I,J}\ge\E(\Phi,A,IJ)
\end{equation}
is known to fail for $\Phi=C_l$, if $2$ is not invertible in $A$.
(this follows from general statements about mixed commutators of relative subgroups in generalized
unitary groups, see~\cite[Lemmas~7,8]{HVZUniMulti} or~\cite[Lemma~23]{HVZUniRel}).
This exceptional%
\footnote{Exceptional groups are not exceptions in this settings, the only exception
is the classical group $\Sp_{2n}$.} case is considered in section~\ref{SpSection}.
The case of $\Sp_{2n}$ is already done in~\cite{ApteChattRao} provided that $2$ is invertible
in $A$, although the latter condition is omitted in ``Blanket assumptions'' of that paper.
Without this condition Lemma~3.6 of~\cite{ApteChattRao} is probably wrong.
Thus, section~\ref{SpSection} is to fill this gap.

Actually, result of section~\ref{SpSection} hold in general and they are sufficient for the proof of
the main theorems.
On the other hand, we want to state technical results of sections~\ref{splitting}--\ref{CommCalc}
in a sharp form as they are important themselves. Therefore, the following condition
is assumed in Lemmas~\ref{moving t}, \ref{inclusion}, and~\ref{decomposition}.

\begin{cond}\label{cond1}\strut
\begin{itemize}
\item
$\Phi\ne A_1$.
\item
If $\Phi=G_2$, then $A$ has no residue fields of\/ $2$ elements.
\item
If $\Phi=C_l$, then $2$ is invertible in $A$.
\end{itemize}
\end{cond}

\begin{lemma}[{\cite[Lemma~17]{HVZrelachev}}]\label{RelInc}
Under condition~\ref{cond1} inclusion~(\ref{RelInclusion}) is valid.
\end{lemma}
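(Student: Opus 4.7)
Since the lemma is cited from \cite{HVZrelachev}, I only sketch the natural strategy. The nontrivial inclusion is $\E(\Phi,A,IJ)\subseteq E_{I,J}$. The first observation is that $E_{I,J}$ is already normal in $\E(\Phi,A)$: by the absolute standard commutator formulas (\ref{absolute}) both $\E(\Phi,A,I)$ and $\E(\Phi,A,J)$ are normal in $\E(\Phi,A)$, and therefore so is their mutual commutator subgroup. Hence, since $\E(\Phi,A,IJ)$ is defined as the normal closure of $\E(\Phi,IJ)$ in $\E(\Phi,A)$, it suffices to prove that each generator $x_\alpha(ab)$ with $\alpha\in\Phi$, $a\in I$, $b\in J$ lies in $E_{I,J}$.

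Because $\Phi\neq A_1$, every root $\alpha$ admits a decomposition $\alpha=\gamma+\delta$ with $\gamma,\delta\in\Phi$. Applying the Chevalley commutator formula
$$[x_\gamma(a),x_\delta(b)]=\prod_{\substack{i,j\ge 1\\ i\gamma+j\delta\in\Phi}}x_{i\gamma+j\delta}\bigl(N_{\gamma,\delta,i,j}\,a^ib^j\bigr),$$
one sees that the left hand side belongs to $E_{I,J}$, while the right hand side equals $x_\alpha(\pm ab)$ multiplied by ``higher'' terms $x_{i\gamma+j\delta}(N\,a^ib^j)$ with $i+j\ge 3$. For simply-laced $\Phi$ there are no higher terms and the result is immediate. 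In general, each argument $a^ib^j$ can be rewritten as an element of $I$ times an element of $J$ (for instance $a^ib^j=a\cdot(a^{i-1}b^j)$), so an induction on the height of $\alpha$, or equivalently on $i+j$, allows one to absorb the higher terms into $E_{I,J}$ and thereby isolate $x_\alpha(\pm ab)$ as a product of commutators of the required form.

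The main obstacle is the doubly-laced case, and especially $\Phi=C_l$. When $\alpha$ is a short root, the Chevalley expansion contributes a long-root correction of the shape $x_{2\alpha}(\pm 2\,a\cdot ab)$; to realise this correction as a product of commutators with arguments of the prescribed levels one must divide the coefficient by $2$, which is exactly why Condition~\ref{cond1} insists that $2$ be invertible in $A$ when $\Phi=C_l$. The restriction on residue fields for $G_2$ serves the same purpose, namely the invertibility of the relevant Chevalley structure constants that appear when $i$ or $j$ equals $2$ or $3$. Once these arithmetic obstructions are cleared, the rewriting procedure closes and the desired inclusion follows; the case $\Phi=C_l$ without $2$ being invertible is postponed to Section~\ref{SpSection} as announced in the introduction.
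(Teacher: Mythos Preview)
The paper does not supply its own proof of this lemma; it is quoted from \cite[Lemma~17]{HVZrelachev}, as you note at the outset. So there is no in-paper argument to compare against, and your sketch stands on its own.

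Your outline is the standard one and is essentially correct: normality of $E_{I,J}$ (via the absolute formulas~(\ref{absolute})) reduces the problem to $x_\alpha(ab)\in E_{I,J}$, and the Chevalley commutator formula does the work. Two points deserve tightening. First, ``induction on the height of $\alpha$, or equivalently on $i+j$'' is not the right bookkeeping: the residual factors $x_{i\gamma+j\delta}(\cdots)$ sit \emph{further along} the $(\gamma,\delta)$-string than $\alpha$, so height-induction runs the wrong way. The clean organisation is to treat long roots first (choosing a decomposition with $N_{\gamma\delta11}=\pm1$ and no higher factors, or short${+}$short with $N_{\gamma\delta11}=\pm2$ in type $C_l$, whence the need for $2\in A^\times$) and then short roots, whose residual factors are supported on long roots already handled. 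Second, your explanation of the $G_2$ clause is inaccurate: the hypothesis ``no residue field of two elements'' does \emph{not} make the constants $\pm2,\pm3$ invertible (take $A=\mathbb F_4$, where $2=0$), so invertibility of structure constants cannot be the mechanism. In this paper the $\mathbb F_2$-free hypothesis enters via Lemma~\ref{r2-r} (the ideal generated by all $r^2-r$ is the unit ideal), exactly as in the symplectic computation of Lemma~\ref{SpRelInc}; it is this $r^2-r$ trick, not divisibility, that circumvents the bad structure constants.
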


The reverse inclusion to~\ref{RelInclusion} is valid if $I+J=A$
(see~\cite[Theorem~5]{VavStepCommForm} for the case $\Phi=A_l$),
in general a counterexample was constructed in~\cite{MasonComm2}.
On the other hand, under condition~\ref{cond1} the formula
\begin{equation}\label{relative}
[\G(\Phi,A,I),\E(\Phi,A,J)]=[\E(\Phi,A,I),\E(\Phi,A,J)]
\end{equation}
follows from 3 subgroup lemma (formula~(\ref{3sbg})) and absolute standard commutator formulas~(\ref{absolute})
almost immediately (see the proof of~\cite[Theorem~2]{VavStepCommForm}).
We reproduce the proof in section~\ref{CommForm} for completeness.

To prove the relative dilation principle and the relative LGP, we shall use only absolute
standard commutator formulas~(\ref{absolute}).

We call $J$ a splitting ideal if $A=R\oplus J$ as additive groups, where $R$ is a subring
of $A$. Of course, in this case $R\cong A/J$. Equivalently, $J$ is a splitting ideal iff
it is a kernel of a retraction $A\to R\subseteq A$. In the current article we use
only one example of splitting ideals. Namely, if $A=R[t]$ is a polynomial ring,
then the ideal $tA$ is obviously splitting.

The following statement is called (absolute) splitting principle.
It follows easily from Lemma~\ref{GroupSplitting} (which itself is very simple).
It was formulated in~\cite[Proposition~1.6]{AbeLGP} and~\cite[Lemma~3.3]{ApteChattRao}.

\begin{lemma}\label{AbsoluteSplitting}
If $J$ is a splitting ideal of a ring $A$, then
$
\E(\Phi,A)\cap\G(\Phi,A,J)=\E(\Phi,A,J).
$
\end{lemma}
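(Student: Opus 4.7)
The inclusion $\E(\Phi,A,J)\subseteq\E(\Phi,A)\cap\G(\Phi,A,J)$ holds directly from the definition of $\E(\Phi,A,J)$ as the normal closure of $\E(\Phi,J)$ inside $\E(\Phi,A)$: every generator and every $\E(\Phi,A)$-conjugate thereof is elementary and congruent to $1$ modulo $J$. The content is the reverse inclusion, and my plan is to exploit the retraction $\pi\colon A\to R\hookrightarrow A$ given by the splitting $A=R\oplus J$. Since $\pi$ kills $J$, it factors through the quotient $A\to A/J\cong R$. Consequently, the induced idempotent group endomorphism $\pi_*\colon\G(\Phi,A)\to\G(\Phi,A)$ factors through the reduction morphism $\G(\Phi,A)\to\G(\Phi,A/J)$; in particular $\pi_*$ annihilates the kernel $\G(\Phi,A,J)$ of that reduction.

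Now take $g\in\E(\Phi,A)\cap\G(\Phi,A,J)$ and write it as a product of root unipotents, $g=x_{\alpha_1}(r_1)\cdots x_{\alpha_n}(r_n)$. For each $i$, split $r_i=\pi(r_i)+j_i$ with $j_i:=r_i-\pi(r_i)\in J$, and use Steinberg additivity to write $x_{\alpha_i}(r_i)=x_{\alpha_i}(\pi(r_i))\,x_{\alpha_i}(j_i)$. Set $y_i:=x_{\alpha_i}(\pi(r_i))\in\E(\Phi,R)$ and $z_i:=x_{\alpha_i}(j_i)\in\E(\Phi,J)$, so that $g=y_1z_1y_2z_2\cdots y_nz_n$. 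Sliding each $y_i$ to the left past the preceding $z_j$'s, at the cost of conjugating those $z_j$'s by the $y_i$, gives the rearrangement
\[
g=(y_1y_2\cdots y_n)\cdot\bigl(z_1^{y_2\cdots y_n}\,z_2^{y_3\cdots y_n}\cdots z_{n-1}^{y_n}\,z_n\bigr).
\]
The first factor is exactly $\pi_*(g)$, which equals $1$ by the previous paragraph, while each factor of the second product is an $\E(\Phi,A)$-conjugate of an element of $\E(\Phi,J)$, and so lies in $\E(\Phi,A,J)$ by definition. Hence $g\in\E(\Phi,A,J)$, completing the argument.

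The whole proof is essentially formal once two observations are in place: that a ring-theoretic splitting produces an idempotent on the group of points whose kernel contains $\G(\Phi,A,J)$, and that any elementary word can be separated into an ``$R$-part'' and a ``$J$-part'' by Steinberg additivity. The only point requiring care is the factorization of $\pi_*$ through the reduction map $\G(\Phi,A)\to\G(\Phi,A/J)$, which is precisely the functoriality of the Chevalley--Demazure scheme; I expect no serious obstacle here, and indeed this appears to be the abstract content encoded in the cited Lemma~\ref{GroupSplitting}.
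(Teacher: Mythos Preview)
Your proof is correct and follows essentially the same approach as the paper: the paper deduces the result from the abstract semidirect-product Lemma~\ref{GroupSplitting} applied with $G'=\G(\Phi,R)$, $G''=\G(\Phi,A,J)$, $H'=\E(\Phi,R)$, $H''=\E(\Phi,A,J)$, and your argument is precisely the verification that $\E(\Phi,A)=\E(\Phi,R)\,\E(\Phi,A,J)$ needed to invoke that lemma, together with the observation that the retraction kills $\G(\Phi,A,J)$. In other words, you have unpacked the application of Lemma~\ref{GroupSplitting} directly rather than quoting it.
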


\section{Relative splitting principle}\label{splitting}

Let $J$ be a splitting ideal of $A$.
For relative groups absolute splitting principle~\ref{AbsoluteSplitting} implies that
$$\E(\Phi,A,I)\cap\G(\Phi,A,J)=\E(\Phi,A,I)\cap\E(\Phi,A,J).$$
For the relative dilation principle (Theorem~\ref{RelDilation}), a smaller group
on the right hand side of the above formula is required. Hence to aquire the desired statement there, we must work with an extended
ideal $I$ of $A$ induced from the retraction $A/J\to A$, i.\,e. $I=I'A$ for some ideal $I'$ in the image
of $A/J$.

The splitting principle based on the following simple group-theoretical lemma.
The proof is an easy group theoretical exercise and will be left to the reader.

\begin{lemma}\label{GroupSplitting}
Let $G=G'\ltimes G''$ be a semidirect product (so that $G''$ is a normal subgroup).
Let $H'\le G'$ and $H''\le G''$ be such that $H=H'H''$ is a subgroup (this amounts to say
that $H'$ normalizes $H''$). Then $H''=H\cap G''$.
\end{lemma}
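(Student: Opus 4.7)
The plan is to prove the two inclusions separately; only one of them requires any argument at all.

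The inclusion $H''\subseteq H\cap G''$ is immediate: by hypothesis $H''\le G''$, and obviously $H''\subseteq H'H''=H$.

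For the reverse inclusion, I would pick an arbitrary element $h\in H\cap G''$. Since $H=H'H''$ as a set, I can factor $h=h'h''$ with $h'\in H'$ and $h''\in H''$. Then $h'=h(h'')^{-1}$, and both factors on the right lie in $G''$ (the element $h$ by assumption, and $h''$ because $H''\le G''$). Hence $h'\in G''$. On the other hand, $h'\in H'\le G'$, so $h'\in G'\cap G''$. Because $G=G'\ltimes G''$ is a \emph{semidirect} product, $G'\cap G''=\{1\}$, which forces $h'=1$ and therefore $h=h''\in H''$.

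There is really no obstacle here; the only point worth emphasizing is why the factorization $h=h'h''$ is legitimate. That is exactly the content of the hypothesis that $H=H'H''$ is a subgroup (equivalently, that $H'$ normalizes $H''$), so that every element of $H$ admits such a decomposition. Once this is granted, the semidirect-product condition $G'\cap G''=\{1\}$ does all the remaining work.
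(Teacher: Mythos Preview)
Your proof is correct. The paper itself omits the argument entirely, stating only that ``the proof is an easy group theoretical exercise and will be left to the reader''; your write-up is exactly the natural verification one would supply.
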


To state the relative splitting principle in a form
independent of condition~\ref{cond1}
we introduce the following notation: $\bar E_{I,J}=E_{I,J}\E(\Phi,A,IJ)$.
By Lemma~\ref{RelInc}, under condition~\ref{cond1} we have $\bar E_{I,J}=E_{I,J}$.
Since relative elementary subgroup by definition is normal in $E(R)$,
the group $\bar E_{I,J}$ is normal in $E(\Phi,R)$ and
$\bar E_{I,J}\le E(\Phi,R,I)\cap E(\Phi,R,J)$.

\begin{lemma}[Relative Splitting Principle]\label{RelativeSplitting}
Suppose that $J$ is a splitting ideal of $A$ so that $A=R\oplus J$ for some subring $R$ of $A$.
Let $I'$ be an ideal of $R$. Put $I=AI'$. Then
$$\E(\Phi,A,I)\cap\G(\Phi,A,J)=\bar E_{I,J}.$$
\end{lemma}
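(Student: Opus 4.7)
The plan is to deduce the relative splitting principle from Lemmas~\ref{AbsoluteSplitting} and~\ref{GroupSplitting}. The retraction coming from $A=R\oplus J$ yields $\G(\Phi,A)\to\G(\Phi,R)\hookrightarrow\G(\Phi,A)$ with kernel $\G(\Phi,A,J)$, so $\G(\Phi,A)=\G(\Phi,R)\ltimes\G(\Phi,A,J)$. Intersecting with $\E(\Phi,A)$ and applying the absolute splitting principle yields $\E(\Phi,A)=\E(\Phi,R)\ltimes\E(\Phi,A,J)$. Because $\E(\Phi,A,I)\le\E(\Phi,A)$, the same principle lets us replace $\G(\Phi,A,J)$ by $\E(\Phi,A,J)$ on the left-hand side of the claim; it therefore suffices to prove
$$\E(\Phi,A,I)\cap\E(\Phi,A,J)=\bar E_{I,J}.$$

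The heart of the argument will be the factorization $\E(\Phi,A,I)=\E(\Phi,R,I')\cdot\bar E_{I,J}$. From $I=AI'=RI'+JI'=I'+JI'$, and the inclusion $JI'\subseteq IJ$ (since $I'\subseteq I$), every $r\in I$ splits as $r=r'+r''$ with $r'\in I'$ and $r''\in IJ$, so $x_\alpha(r)=x_\alpha(r')\,x_\alpha(r'')$ with $x_\alpha(r')\in\E(\Phi,R,I')$ and $x_\alpha(r'')\in\E(\Phi,A,IJ)\subseteq\bar E_{I,J}$. For an arbitrary generator $zx_\alpha(r)z^{-1}$ of the normal closure $\E(\Phi,A,I)$ I then decompose $z=z'z''$ via the semidirect product. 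The identity
$$z''x_\alpha(r')z''^{-1}=x_\alpha(r')\cdot[x_\alpha(r'),z''^{-1}]$$
puts the commutator into $[\E(\Phi,A,I),\E(\Phi,A,J)]=E_{I,J}\subseteq\bar E_{I,J}$, while $z''x_\alpha(r'')z''^{-1}\in\E(\Phi,A,IJ)$ by normality of the relative elementary subgroup. Conjugation by $z'\in\E(\Phi,R)$ preserves $\E(\Phi,R,I')$ by definition and normalizes $\bar E_{I,J}$, which is normal in all of $\E(\Phi,A)$. Hence $\E(\Phi,R,I')\cdot\bar E_{I,J}$ is a subgroup and contains every generator of $\E(\Phi,A,I)$; the reverse inclusion is immediate from $I'\subseteq I$ and $\bar E_{I,J}\subseteq\E(\Phi,A,I)$.

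With the factorization in place I apply Lemma~\ref{GroupSplitting} with $G=\E(\Phi,A)$, $G'=\E(\Phi,R)$, $G''=\E(\Phi,A,J)$, $H'=\E(\Phi,R,I')$, $H''=\bar E_{I,J}$ and $H=H'H''=\E(\Phi,A,I)$; its conclusion $H''=H\cap G''$ is exactly the required identity. The main obstacle lies in the second paragraph: pushing conjugation by $\E(\Phi,A,J)$ past the ``$R$-part'' $x_\alpha(r')$ of a generator while remaining in $\E(\Phi,R,I')\cdot\bar E_{I,J}$ is precisely what forces the mixed commutator $E_{I,J}$ to enter, and also explains why one must enlarge $E_{I,J}$ to $\bar E_{I,J}$ in the absence of condition~\ref{cond1}, where $\E(\Phi,A,IJ)$ need not be contained in $E_{I,J}$.
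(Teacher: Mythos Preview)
Your proof is correct and follows essentially the same route as the paper: establish the factorization $\E(\Phi,A,I)=\E(\Phi,R,I')\cdot\bar E_{I,J}$ by decomposing $I=I'\oplus IJ$, splitting a conjugator via the absolute splitting principle, and then invoking Lemma~\ref{GroupSplitting}. The only cosmetic difference is that you first pass from $\G(\Phi,A,J)$ to $\E(\Phi,A,J)$ and apply Lemma~\ref{GroupSplitting} inside $\E(\Phi,A)$, whereas the paper applies it directly inside $\G(\Phi,A)=\G(\Phi,R)\ltimes\G(\Phi,A,J)$, which makes your preliminary reduction unnecessary.
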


\begin{proof}
Applying the functor $\G(\Phi,\blank)$ to $R\hookrightarrow A\twoheadrightarrow R$ we get a retraction
$\G(\Phi,R)\hookrightarrow \G(\Phi,A)\twoheadrightarrow \G(\Phi,R)$ and the kernel of the latter map is
$\G(\Phi,A,J)$. Therefore, $\G(\Phi,A)=\G(\Phi,R)\ltimes\G(\Phi,A,J)$.
Clearly, $\E(\Phi,R,I')\le\G(\Phi,R)$, $\bar E_{I,J}\le \G(\Phi,A,J)$, and
$E_{I,J}$ is normal in $\E(\Phi,A)$ (by definition of relative elementary subgroup).
By Lemma~\ref{GroupSplitting} it suffices to show that
$$\E(\Phi,A,I)=\bar E_{I,J}\E(\Phi,R,I').$$
Since $\E(\Phi,A,I)$ is normal in $\E(\Phi,A)$, the right hand side is contained
in the left hand side.

Note that the ideal~$I$ is additively generated by the elements~$rs$, where $r\in A$ and $s\in I'$.
Since $A=R\oplus J$ we can write $r=r'+r''$ for some $r'\in R$ and $r''\in J$.
Then $rs=r's+r''s\in I'+IJ$. Since $R\cap J=\{0\}$ we have $I=I'\oplus IJ$.

The group $\E(\Phi,A,I)$ is generated by the elements $x_\alpha(u+v)^b$, where $\alpha\in\Phi$,
$u\in I'$, $v\in IJ$, and $b\in\E(\Phi,A)$. Since $x_\alpha(v)^b\in\E(\Phi,A,IJ)\le\bar E_{I,J}$,
it remains to show that $x_\alpha(u)^b\in E_{I,J}\E(\Phi,R,I')$.
By the absolute splitting principle we can write $b=cd$, where $c\in\E(\Phi,R)$ and $d\in\E(\Phi,A,J)$.
Then $g=x_\alpha(u)^c\in\E(\Phi,R,I')$ and $x_\alpha(u)^b=g^d=[d,g^{-1}]\,g\in E_{I,J}\E(\Phi,R,I')$.
\end{proof}

\section{Commutator formulas}\label{CommForm}
The only commutator calculations with particular elements are used in sections~\ref{CommCalc}
and~\ref{SpSection}.
In the current section we perform some commutator calculus with subgroups in the spirit
of~\cite{VavStepCommForm}.

Let $H$ be an arbitrary algebraic group scheme over a ring $Z$ and $A$ a $Z$-algebra
(for a Chevalley group scheme one can take $Z=\Z$, the notation is to illustrate this).
Denote by $H(A,I)$ the kernel of the reduction homomorphism $H(A)\to H(A/I)$.
In the following lemma, we use a faithful representation of the group scheme $H$.
A faithful representation is an injective homomorphism with a special choice of generators of the
affine algebra of $H$. In fact the lemma can be proved by the clever choice of set of generators, but
the arguments using representation look more clear.
For $H=\GL_n$ the lemma was obtained in~\cite{VavStepCommForm0}.
Throughout this section $I$ and $J$ denote arbitrary ideals of a ring $A$.

\begin{lemma}\label{GIJ}
$[H(A,I),H(A,J)]\le H(A,IJ)$.
\end{lemma}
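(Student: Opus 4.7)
The plan is to reduce the assertion to the analogous statement for $\GL_n$ by means of a faithful representation $H\hookrightarrow\GL_n$ of the group scheme $H$. Since this is a closed immersion of $Z$-schemes, functoriality in the $Z$-algebra yields, for every $Z$-algebra $B$, an injection $H(B)\hookrightarrow\GL_n(B)$ compatible with the reduction maps $B\to B/K$. Consequently, for every ideal $K$ of $A$, the kernel $H(A,K)$ of $H(A)\to H(A/K)$ coincides with $H(A)\cap\GL_n(A,K)$, as one sees from the naturality square together with the injectivity of $H(A/K)\hookrightarrow\GL_n(A/K)$. Because commutators of elements of $H(A)$ stay in $H(A)$, it then suffices to prove the statement with $\GL_n$ in place of $H$.

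For this, I take $g\in\GL_n(A,I)$ and $h\in\GL_n(A,J)$ and write $g=1+a$, $h=1+b$, where the entries of the matrix $a$ lie in $I$ and those of $b$ in $J$. A direct matrix computation gives $gh-hg=ab-ba$, whose entries lie in $IJ$. Multiplying on the left by $g^{-1}h^{-1}$, which has entries in $A$, preserves this property, so
\[
[g,h]-1=g^{-1}h^{-1}(gh-hg)
\]
has entries in $IJ$. Equivalently $[g,h]\in\GL_n(A,IJ)$, and the reduction step above now finishes the proof.

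Neither step presents a genuine obstacle. The matrix calculation is elementary once one rewrites $[g,h]-1$ as $g^{-1}h^{-1}(gh-hg)$, and the only structural input is the existence of a faithful representation and the naturality square identifying $H(A,IJ)$ with $H(A)\cap\GL_n(A,IJ)$. If anything, the subtle point is that the lemma does \emph{not} require any of the standard commutator formulas for Chevalley groups, nor any special choice of generators beyond what is already encoded in the closed immersion $H\hookrightarrow\GL_n$; it is a formal consequence of having any faithful linear representation of the group scheme.
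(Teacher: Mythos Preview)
Your proof is correct and follows essentially the same approach as the paper: reduce to $\GL_n$ via a faithful representation, identify $H(A,K)=H(A)\cap\GL_n(A,K)$, and invoke the $\GL_n$ case. The only difference is that you spell out the elementary matrix computation $[g,h]-1=g^{-1}h^{-1}(gh-hg)$ explicitly, whereas the paper cites this as \cite[Lemma~3]{VavStepCommForm0}.
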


\begin{proof}
Let $H\to\GL_n$ be a faithful rational representation of~$H$.
Identifying elements of $H(A)$ with their images in $\GL_n(A)$ we have
$H(A,I)=H(A)\cap\GL_n(A,I)$. By~\cite[Lemma~3]{VavStepCommForm0}
the statement holds for $H=\GL_n$. Thus,
$$
[H(A,I),H(A,J)]\le H(A)\cap[\GL_n(A,I),\GL_n(A,J)]\le H(A)\cap\GL_n(A,IJ)=H(A,IJ).
$$
\end{proof}

Now we can prove equation~\ref{relative}. The proof is the same as for~\cite[Theorem~2]{VavStepCommForm}.

\begin{lemma}\label{RelCommForm}
Under condition~$\ref{cond}$ we have
$$E_{I,J}\le[\G(\Phi,A,I),\E(\Phi,A,J)]\le \bar E_{I,J}.$$
Under condition~$\ref{cond1}$ all inequalities turn into equalities.
\end{lemma}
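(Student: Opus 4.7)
The lower inclusion $E_{I,J}\le[\G(\Phi,A,I),\E(\Phi,A,J)]$ is immediate from $\E(\Phi,A,I)\le\G(\Phi,A,I)$ by monotonicity of the commutator subgroup in each argument.

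For the upper inclusion $[\G(\Phi,A,I),\E(\Phi,A,J)]\le\bar E_{I,J}$, I follow the strategy of \cite[Theorem~2]{VavStepCommForm}. Rewrite $\E(\Phi,A,J)=[\E(\Phi,A,J),\E(\Phi,A)]$ using the last equality of~(\ref{absolute}). The three subgroups $\G(\Phi,A,I)$, $\E(\Phi,A,J)$, $\E(\Phi,A)$ are all normal in $\G(\Phi,A)$: the first as a congruence kernel, the second and third by~(\ref{absolute}). Hence the 3-subgroup lemma~(\ref{3sbg}) yields
\[
[\G(\Phi,A,I),\E(\Phi,A,J)]\le\bigl[\E(\Phi,A,J),[\E(\Phi,A),\G(\Phi,A,I)]\bigr]\cdot\bigl[\E(\Phi,A),[\G(\Phi,A,I),\E(\Phi,A,J)]\bigr].
\]
The first factor equals $[\E(\Phi,A,J),\E(\Phi,A,I)]=E_{I,J}$ by~(\ref{absolute}) applied to~$I$. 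For the second, Lemma~\ref{GIJ} forces $[\G(\Phi,A,I),\E(\Phi,A,J)]\le\G(\Phi,A,IJ)$, so~(\ref{absolute}) applied to~$IJ$ gives $[\E(\Phi,A),\G(\Phi,A,IJ)]=\E(\Phi,A,IJ)$. Combining, $[\G(\Phi,A,I),\E(\Phi,A,J)]\le E_{I,J}\cdot\E(\Phi,A,IJ)=\bar E_{I,J}$.

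Under condition~\ref{cond1}, Lemma~\ref{RelInc} provides the reverse inclusion $\E(\Phi,A,IJ)\le E_{I,J}$, so $\bar E_{I,J}=E_{I,J}$ and both inequalities collapse to equalities.

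The main subtlety is combinatorial rather than computational: among the three representations of $\E(\Phi,A,J)$ available in~(\ref{absolute}), only the choice $\E(\Phi,A,J)=[\E(\Phi,A,J),\E(\Phi,A)]$ causes both right-hand factors from the 3-subgroup lemma to close off cleanly. The alternative substitutions produce a right-hand side featuring $[\G(\Phi,A,J),\E(\Phi,A,I)]$, which merely relates the desired commutator to its $I\leftrightarrow J$ swap, yielding a self-referential estimate rather than a bound by $\bar E_{I,J}$.
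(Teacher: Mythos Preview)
Your proof is correct and follows the same route as the paper's: rewrite $\E(\Phi,A,J)=[\E(\Phi,A,J),\E(\Phi,A)]$ via~(\ref{absolute}), apply the 3-subgroup lemma, then bound one factor by $E_{I,J}$ using~(\ref{absolute}) for $I$ and the other by $\E(\Phi,A,IJ)$ using Lemma~\ref{GIJ} together with~(\ref{absolute}) for $IJ$. Your two factors are exactly the paper's, just listed in the opposite order.
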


\begin{proof}
The former inequality is trivial.
By condition~\ref{cond} we have absolute standard commutator formulas~(\ref{absolute}).
Therefore,
\begin{multline*}
[\G(\Phi,A,I),\E(\Phi,A,J)]=\bigl[\G(\Phi,A,I),[\E(\Phi,A,J),\E(\Phi,A)]\bigr]\le
\qquad\text{(by 3 subgroup lemma)}\\
[\G(\Phi,A,I),\E(\Phi,A,J),\E(\Phi,A)]\cdot[\E(\Phi,A),\G(\Phi,A,I),\E(\Phi,A,J)]\le
\qquad\text{(by Lemma \ref{GIJ})}\\
[\G(\Phi,A,IJ),\E(\Phi,A)]\cdot[\E(\Phi,A,I),\E(\Phi,A,J)]=\E(\Phi,A,IJ)E_{I,J}
\end{multline*}
\end{proof}

Let $I,J,K$ be ideals of a commutative ring $A$. The following useful fact
was observed for $GL_n$ in~\cite{HazZhangMulti}.

\begin{lemma}\label{EEE}
Under condition~$\ref{cond}$ we have
$$E_{IJ,K}\le[\bar E_{I,J},\E(\Phi,A,K)]\le\bar E_{IJ,K}.$$
Under condition~$\ref{cond1}$ all inequalities turn into equalities.
\end{lemma}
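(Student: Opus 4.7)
The plan is to establish the two inclusions separately, using only the commutator calculus already developed in Lemmas~\ref{GIJ} and~\ref{RelCommForm}, together with the definition $\bar E_{I,J}=E_{I,J}\E(\Phi,A,IJ)$.

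For the lower inclusion $E_{IJ,K}\le[\bar E_{I,J},\E(\Phi,A,K)]$, I observe that $\E(\Phi,A,IJ)\le\bar E_{I,J}$ directly from the definition, so taking commutators with $\E(\Phi,A,K)$ preserves the containment and gives $E_{IJ,K}=[\E(\Phi,A,IJ),\E(\Phi,A,K)]\le[\bar E_{I,J},\E(\Phi,A,K)]$. This step is essentially free.

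For the upper inclusion, I will decompose $\bar E_{I,J}=E_{I,J}\cdot\E(\Phi,A,IJ)$ and exploit the fact that $E_{I,J}$, $\E(\Phi,A,IJ)$, and $\E(\Phi,A,K)$ are all normal in $\E(\Phi,A)$. The commutator identity $[XY,Z]\le[X,Z]\cdot[Y,Z]$ (for subgroups mutually normalized in an ambient group) then yields
\[
[\bar E_{I,J},\E(\Phi,A,K)]\le[E_{I,J},\E(\Phi,A,K)]\cdot[\E(\Phi,A,IJ),\E(\Phi,A,K)].
\]
The second factor on the right equals $E_{IJ,K}$, which already lies in $\bar E_{IJ,K}$. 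For the first factor, Lemma~\ref{GIJ} supplies the crucial containment $E_{I,J}\le\G(\Phi,A,IJ)$, and then Lemma~\ref{RelCommForm} applied with the ideal pair $(IJ,K)$ in place of $(I,J)$ gives $[\G(\Phi,A,IJ),\E(\Phi,A,K)]\le\bar E_{IJ,K}$. This is the one place where condition~\ref{cond} is needed, since it is invoked by Lemma~\ref{RelCommForm}.

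The equality assertion under condition~\ref{cond1} then follows formally: Lemma~\ref{RelInc} gives $\E(\Phi,A,IJ)\le E_{I,J}$ and $\E(\Phi,A,IJK)\le E_{IJ,K}$, collapsing $\bar E_{I,J}=E_{I,J}$ and $\bar E_{IJ,K}=E_{IJ,K}$, so the outer bounds coincide and sandwich the middle term. I do not foresee any real obstacles; the only routine check is the normality hypothesis that justifies $[XY,Z]\le[X,Z][Y,Z]$, but this is automatic since relative elementary subgroups and their mixed commutators are by construction normal in $\E(\Phi,A)$.
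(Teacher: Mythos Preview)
Your proof is correct and follows essentially the same approach as the paper: both arguments split $\bar E_{I,J}$ into $E_{I,J}\cdot\E(\Phi,A,IJ)$, reduce the upper inclusion to bounding $[E_{I,J},\E(\Phi,A,K)]$ via Lemmas~\ref{GIJ} and~\ref{RelCommForm}, and get the lower inclusion from $\E(\Phi,A,IJ)\le\bar E_{I,J}$. The only cosmetic difference is that the paper packages both inclusions into the single identity $[\bar E_{I,J},\E(\Phi,A,K)]=[E_{I,J},\E(\Phi,A,K)]\cdot E_{IJ,K}$, whereas you treat them separately.
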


\begin{proof}
All the subgroups involved in the statement are normal, so we have
$[\bar E_{I,J},\E(\Phi,A,K)]=[E_{I,J},\E(\Phi,A,K)]\cdot E_{IJ,K}$.
Therefore, it remains to show that $[E_{I,J},\E(\Phi,A,K)]\le\bar E_{IJ,K}$,
but this follows immediately from lemmas~\ref{GIJ} and~\ref{RelCommForm}.
\end{proof}

From now on we assume that condition~\ref{cond1} holds until the middle of
section~\ref{CommCalc}.

The following statement is the first ingredient for the dilation principle.
In the proof of the dilation principle it is used with $J=At$, where
$A=R[t]$ is a polynomial ring. The lemma allows to
move an independent variable $t$ from the first subgroup of the mixed commutator
to the second one.

\begin{lemma}\label{moving t}
Let $I,J,K$ be ideals of $A$. Under condition~$\ref{cond1}$ we have
$E_{IJ^2,K}\le\E(\Phi,IJK)E_{IJ,KJ}$.
\end{lemma}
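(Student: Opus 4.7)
The plan is to decompose the mixed commutator $E_{IJ^2,K}=[\E(\Phi,A,IJ^2),\E(\Phi,A,K)]$ by refactoring the ideal product $IJ^2=IJ\cdot J$ and then using the 3-subgroup lemma to transfer one power of $J$ from the first commutant to the second, at the cost of a remainder that we can absorb into $\E(\Phi,A,IJK)$.

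The first move is to apply Lemma \ref{RelInc} with the ideal pair $(IJ,J)$ to obtain
$\E(\Phi,A,IJ^2)\le E_{IJ,J}=[\E(\Phi,A,IJ),\E(\Phi,A,J)]$, so that
$$E_{IJ^2,K}\le\bigl[[\E(\Phi,A,IJ),\E(\Phi,A,J)],\E(\Phi,A,K)\bigr].$$
Each of the three relative elementary subgroups appearing here is normal in $\G(\Phi,A)$, a consequence of the absolute standard commutator formula~(\ref{absolute}), so the 3-subgroup lemma~(\ref{3sbg}) applies and bounds the right-hand side by
$$[E_{K,IJ},\E(\Phi,A,J)]\cdot[E_{J,K},\E(\Phi,A,IJ)].$$
Under condition~\ref{cond1}, Lemma~\ref{EEE} bounds each of these triple commutators: the first by $E_{KIJ,J}=E_{IJK,J}$ and the second by $E_{JK,IJ}=E_{IJ,KJ}$. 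The second factor is already the desired $E_{IJ,KJ}$; the first is absorbed into $\E(\Phi,A,IJK)$ via one more application of~(\ref{absolute}), namely $E_{IJK,J}\le[\E(\Phi,A,IJK),\E(\Phi,A)]=\E(\Phi,A,IJK)$.

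The only real obstacle is bookkeeping: pairing the ideals $(I',J',K')$ correctly in the two invocations of Lemma~\ref{EEE}, and verifying normality in $\G(\Phi,A)$ so that the 3-subgroup lemma may be invoked. One minor notational caveat: this route naturally delivers the normalized subgroup $\E(\Phi,A,IJK)$ on the right, while the statement writes the unnormalized $\E(\Phi,IJK)$; unless a finer element-level argument is intended, I would read the conclusion as $E_{IJ^2,K}\le\E(\Phi,A,IJK)\,E_{IJ,KJ}$, which is still sufficient for the subsequent use in the dilation principle.
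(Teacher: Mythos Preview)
Your argument is correct and follows essentially the same route as the paper: invoke Lemma~\ref{RelInc} to write $\E(\Phi,A,IJ^2)\le E_{IJ,J}$, apply the 3-subgroup lemma, then collapse the two triple commutators via Lemma~\ref{EEE}, absorbing the $E_{IJK,J}$ factor into the normalized elementary group. Your observation that the argument naturally yields $\E(\Phi,A,IJK)$ rather than the unnormalized $\E(\Phi,IJK)$ is apt; the paper's own proof lands on the same group, and this is all that is needed downstream.
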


\begin{proof}
By Lemma~\ref{RelInc} for any pair $L,M$ of ideals
we have $\E(\Phi,A,LM)\le[\E(\Phi,A,L),\E(\Phi,A,M)]$.
Therefore,

{\parindent=0pt\parskip=3pt plus1pt minus 2pt
$\displaystyle E_{IJ^2,K}=[\E(\Phi,A,IJ^2),\E(\Phi,A,K)]\le [\E(\Phi,A,IJ),\E(\Phi,A,J),\E(\Phi,A,K)]\le$

\hbox to\textwidth{\hfill (by the 3 subgroups lemma)}

$\displaystyle [\E(\Phi,A,K),\E(\Phi,A,IJ),\E(\Phi,A,J)]\cdot[\E(\Phi,A,J),\E(\Phi,A,K),\E(\Phi,A,IJ)]\le$

\hbox to\textwidth{\hfill (by Lemma~\ref{EEE})}

$\displaystyle [\E(\Phi,A,KIJ),\E(\Phi,A,J)]\cdot[\E(\Phi,A,KJ),\E(\Phi,A,IJ)]\le
\E(\Phi,IJK)[\E(\Phi,A,IJ),\E(\Phi,A,KJ)].$
}
\end{proof}

\section{Commutator calculus}\label{CommCalc}

The following lemma is the second ingredient of
the dilation principle. For the absolute case it was formulated (without proof)
by J.Tits in~\cite{TitsCongruence}.

\begin{lemma}\label{inclusion}
Under condition~$\ref{cond1}$ we have
$\E(\Phi,A,IJ^2)\le \E(\Phi,J,IJ)$.
\end{lemma}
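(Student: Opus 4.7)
The plan is to verify the inclusion generator-by-generator: for each element of the form $x_\alpha(r)^g$ with $r\in IJ^2$ and $g\in\E(\Phi,A)$, show it lies in $\E(\Phi,J,IJ)$. By additivity of $x_\alpha$, it suffices to treat $r=ist$ with $i\in I$ and $s,t\in J$. Since $\Phi\ne A_1$, pick roots $\beta,\delta$ with $\beta+\delta=\alpha$; then automatically $\beta,\delta\ne\pm\alpha$ (the other summand would otherwise be $0$ or $\pm 2\alpha$, not a root). The Chevalley commutator formula yields
\[
[x_\beta(is),x_\delta(t)] \;=\; x_\alpha(N\cdot ist)\cdot P,
\]
where $N$ is a structure constant and $P=\prod_{m,n\ge1,\,m+n\ge3} x_{m\beta+n\delta}(c_{mn}(is)^m t^n)$ gathers the higher-order terms. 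The coefficients of $P$ lie in $I^m J^{m+n}\subseteq IJ^3$. One arranges $N$ invertible by the choice of $\beta,\delta$: trivially in the simply laced case, and in the remaining types via condition~\ref{cond1} (in particular $2\in A^\times$ for $\Phi=C_l$).

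In the untwisted case $g=1$, write $[x_\beta(is),x_\delta(t)]=x_\beta(is)^{-1}\cdot x_\beta(is)^{x_\delta(t)}$. The first factor lies in $\E(\Phi,IJ)\subseteq\E(\Phi,J,IJ)$, and the second is an $\E(\Phi,J)$-conjugate of an element of $\E(\Phi,IJ)$, hence in the normal closure $\E(\Phi,J,IJ)$. The higher-term piece $P$ sits in $\E(\Phi,IJ^3)\subseteq\E(\Phi,IJ)\subseteq\E(\Phi,J,IJ)$.

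For general $g$, conjugate to get $x_\alpha(N\cdot ist)^g=[x_\beta(is)^g,x_\delta(t)^g]\cdot P^g$. The piece $P^g$ is handled by a downward induction on the $J$-power in the filtration $IJ\supseteq IJ^2\supseteq IJ^3\supseteq\cdots$, which terminates since only finitely many $m\beta+n\delta$ are roots. The core is to show $[x_\beta(is)^g,x_\delta(t)^g]\in\E(\Phi,J,IJ)$. Induct on the length of $g$ as a product of Steinberg generators $x_\gamma(u)$. When $\gamma\ne\pm\beta,\pm\delta$, the Chevalley commutator formula gives $x_\beta(is)^{x_\gamma(u)}\in\E(\Phi,IJ)$ and $x_\delta(t)^{x_\gamma(u)}\in\E(\Phi,J)$, so the commutator stays in $[\E(\Phi,IJ),\E(\Phi,J)]\subseteq\E(\Phi,J,IJ)$.

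The main obstacle is the delicate case $\gamma\in\{-\beta,-\delta\}$, where an individual conjugate such as $x_\beta(is)^{x_{-\beta}(u)}$ develops a torus component $h_\beta(\epsilon)$ with $\epsilon\equiv 1\pmod{IJ}$ in the $\beta$-$SL_2$ block, spoiling membership in $\E(\Phi,IJ)$. The saving observation, which is the essence of the explicit computation, is that these torus pieces cancel in the full commutator $[x_\beta(is)^{x_{-\beta}(u)},x_\delta(t)^{x_{-\beta}(u)}]$ via the Chevalley relation $[h_\beta(\epsilon),x_\delta(t)]=x_\delta\bigl((\epsilon^{-\langle\delta,\beta\rangle}-1)t\bigr)$, leaving a product of root unipotents with arguments in $IJ^2\subseteq IJ$. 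A sample $A_2$ check (with $\alpha=e_1-e_3$, $\beta=e_1-e_2$, $\delta=e_2-e_3$) gives the clean identity $x_\alpha(ist)^{x_{-\beta}(u)}=x_\alpha(ist)\cdot x_\delta(-uist)$, both factors in $\E(\Phi,IJ^2)$. The bookkeeping needed to maintain this cancellation structure across all generators of $g$—tracking at each step that the intermediate expressions remain $\E(\Phi,J)$-conjugated products of $\E(\Phi,IJ)$-elements modulo strictly deeper terms absorbed by the downward induction—is the technical content of the proof.
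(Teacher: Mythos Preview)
Your attempt has a genuine gap in the induction on the length of $g$. After the first conjugation step, you correctly observe that $x_\beta(is)^{x_\gamma(u)}\in\E(\Phi,IJ)$ and $x_\delta(t)^{x_\gamma(u)}\in\E(\Phi,J)$ when $\gamma\notin\{\pm\beta,\pm\delta\}$. But now these are \emph{products} of several root unipotents, say $x_{\rho_1}(c_1)\cdots x_{\rho_k}(c_k)$, and at the next step the new generator $x_{\gamma'}(u')$ may well satisfy $\gamma'=-\rho_i$ for some $i$. The Chevalley formula no longer applies there, and the $SL_2$ calculation produces a genuine torus contribution that does not cancel against anything (the cancellation you sketch is specific to the original pair $\beta,\delta$, not to the growing collection of roots). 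Since $\E(\Phi,J,IJ)$ is \emph{not} normal in $\E(\Phi,A)$, you cannot simply absorb these conjugations. The ``bookkeeping'' you defer is precisely the whole difficulty, and your sketch does not indicate how to carry it through; the downward induction on $J$-depth for $P^g$ is circular for the same reason.

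The paper avoids this entirely by invoking Vaserstein's generating set (Lemma~\ref{generators}): $\E(\Phi,A,IJ^2)$ is already generated by the elements $x_{-\alpha}(-r)\,x_\alpha(s)\,x_{-\alpha}(r)$ with $r\in A$, $s\in IJ^2$. So one never faces an arbitrary $g$, only a \emph{single} conjugation by $x_{-\alpha}(r)$. The commutator decomposition you wrote down (Lemma~\ref{decomposition} in the paper) is then used to rewrite $x_\alpha(s)$ as a product of $\E(\Phi,J)$-conjugates of elements $x_\gamma(v)$ with $v\in IJ$ and, crucially, $\gamma\ne\pm\alpha$. Conjugating this by the single $x_{-\alpha}(r)$ is then harmless: since $-\alpha$ is not the negative of any root appearing, the Chevalley formula applies throughout and everything stays inside $\E(\Phi,J,IJ)$ (Lemma~\ref{simple conjugation}). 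The missing idea in your argument is exactly this reduction to Vaserstein's generators.
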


The proof of the above statement is based on the following three lemmas.
Let $\alpha\in\Phi$. Denote by $Y(\alpha,J,I)$ the subgroup of $\E(\Phi,A)$ generated by
elements $x_\beta(-r)x_\gamma(s)x_\beta(r)$ for all $\beta,\gamma\in\Phi\setminus\{\pm\alpha\}$,
$r\in J$ and $s\in I$. The next statement shows that we can decompose an elementary root
unipotent element of level $IJ^2$ to a product of elementary unipotents, avoiding
a given root and its inverse (the case $\Phi=C_l$ and $2$ is not invertible in $A$
is exceptional as above).

\begin{lemma}\label{decomposition}
Let $\alpha\in\Phi$. If condition~\ref{cond1} holds, then $x_\alpha(IJ^2)\le Y(\alpha,J,IJ)$.
\end{lemma}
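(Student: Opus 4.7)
The plan is to express $x_\alpha(r)$, for an arbitrary $r\in IJ^2$, as a product of generators of $Y(\alpha,J,IJ)$ by means of the Chevalley commutator formula. First I would reduce to a single monomial: $IJ^2$ is additively generated by triple products $abc$ with $a,b\in J$ and $c\in I$; since $x_\alpha$ is additive in its argument and $Y(\alpha,J,IJ)$ is a subgroup, it suffices to handle $x_\alpha(abc)$. Regrouping as $abc=a\cdot(bc)$ gives $a\in J$ and $bc\in IJ$, which matches the data of a generator of $Y(\alpha,J,IJ)$.

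Second, because $\Phi\ne A_1$, inspection of each irreducible root system produces $\beta,\gamma\in\Phi\setminus\{\pm\alpha\}$ with $\beta+\gamma=\alpha$. The Chevalley commutator formula then reads
\[
[x_\beta(a),x_\gamma(bc)]=x_\alpha(N\cdot abc)\cdot\prod_{\substack{i,j\ge 1\\ i+j\ge 3}}x_{i\beta+j\gamma}\bigl(N_{ij}\,a^i(bc)^j\bigr),
\]
where $N=N_{\beta\gamma 11}$ and $N_{ij}$ are the usual Chevalley structure constants (product taken in a fixed order). The left-hand side rewrites as $x_\gamma(-bc)^{x_\beta(a)}\cdot x_\gamma(bc)$, a product of two generators of $Y(\alpha,J,IJ)$ (the second one being the $r=0$ instance). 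Each higher-weight factor also lies in $Y$: reducedness and irreducibility of $\Phi$ force $i\beta+j\gamma\ne\pm\alpha$ whenever $i+j\ge 3$ (otherwise $\beta$ and $\gamma$ would be proportional), while the argument $N_{ij}\,a^i(bc)^j\in J^i(IJ)^j=I^jJ^{i+j}\subseteq IJ$, so such a factor is a generator of $Y(\alpha,J,IJ)$ with $r=0$. Solving for the $\alpha$-factor yields $x_\alpha(N\cdot abc)\in Y(\alpha,J,IJ)$.

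The final step is to pass from $x_\alpha(N\cdot abc)$ to $x_\alpha(abc)$ itself. In the simply-laced cases ($A_l$ with $l\ge 2$, $D_l$, $E_l$) every nonzero structure constant is $\pm 1$ and no higher-weight terms appear, so the argument ends here. The main obstacle is the multiply-laced case, where $N$ may be $\pm 2$ or $\pm 3$, and this is precisely what condition~\ref{cond1} is tailored to defuse. For $\Phi=C_l$ the invertibility of $2$ immediately absorbs any factor of $\pm 2$ appearing in $N$; for $\Phi=B_l$ with $l\ge 3$ or $\Phi=F_4$ there is enough room to choose $\beta,\gamma$ of the same length as $\alpha$, forcing $N=\pm 1$; and for $\Phi=G_2$ the hypothesis that $A$ has no residue field of two elements is the standard input that allows a Bezout-type combination of two distinct root-decompositions (producing coefficients $2$ and $3$) to recover $x_\alpha(abc)$. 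The $G_2$ analysis, together with the explicit bookkeeping of Steinberg-relation constants, is where I expect the bulk of the remaining technical work.
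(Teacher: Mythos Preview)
Your overall strategy---reduce to monomials, choose $\beta,\gamma$ with $\alpha=\beta+\gamma$, expand the Chevalley commutator, and observe that the left-hand side and all higher factors lie in $Y(\alpha,J,IJ)$---is exactly the paper's. The divergence is in the case analysis for making $N_{\beta\gamma11}$ a unit, and there you have a concrete error and a misconception.

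The error: for $\Phi=B_l$ with $l\ge3$ and $\alpha$ short, you \emph{cannot} choose $\beta,\gamma$ of the same length as $\alpha$. The short roots of $B_l$ are $\pm e_i$, and no $e_i$ is a sum of two short roots. So ``same length as $\alpha$'' fails here. The paper's fix is a uniform one-line observation: any short root in a multiply-laced system can be written as short${}+{}$long, and in the rank-$2$ subsystem these span (of type $C_2$ or $G_2$) one has $N_{\beta\gamma11}=\pm1$. For long roots (declaring all roots long in the simply-laced case) one writes long${}={}$long${}+{}$long with $N_{\beta\gamma11}=\pm1$; this is available in every $\Phi$ except $C_l$, where the hypothesis $2\in A^\times$ finishes the job via $[x_\beta(pq),x_\gamma(r/2)]=x_\alpha(pqr)$ with $\beta,\gamma$ short.

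The misconception: for $G_2$ no B\'ezout combination is needed, and the ``no residue field $\mathbb F_2$'' clause of condition~\ref{cond1} plays no role in this lemma. Long roots of $G_2$ form an $A_2$, so long${}={}$long${}+{}$long with $N=\pm1$; short roots are handled by short${}+{}$long as above. The $G_2$ clause is used elsewhere (e.g.\ Lemma~\ref{RelInc}), not here. So the ``bulk of the remaining technical work'' you anticipate for $G_2$ evaporates once you organise the cases as short-versus-long rather than system-by-system.
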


\begin{proof}
Take $\beta,\gamma\in\Phi$ such that $\alpha=\beta+\gamma$. For any $p\in I$ and $q,r\in J$
write the Chevalley commutator formula
$$
[x_\beta(pq),x_\gamma(\eps r)]=x_\alpha(N_{\beta\gamma11}\eps pqr)
\prod x_{i\beta+j\gamma}(N_{\beta\gamma ij}\eps^j p^iq^ir^j),
$$
where $\eps$ is an invertible element of $A$ and the product is taken over all $i,j>0$
such that $i\beta+j\gamma\in\Phi$ and $i+j>2$.
Clearly, $\beta$, $\gamma$ and $i\beta+j\gamma$ with $i+j>2$ cannot be equal to $\pm\alpha$.
Therefore, the left hand side and each factor of the product belong to $Y(\alpha,J,IJ)$.
Hence, it suffices to find $\beta$ and $\gamma$ such that
$N_{\beta\gamma11}$ is invertible in $A$ and put $\eps=N_{\beta\gamma11}^{-1}$.

If $\alpha$ is a short root, then it is a sum of a short and long root.
If all roots have the same length, we call them long. In this case
$\alpha$, $\beta$ and $\gamma$ span the root system of type $C_2$ or $G_2$ and
$N_{\beta\gamma11}=\pm1$.

If $\alpha$, $\beta$ and $\gamma$ are long roots, then $N_{\beta\gamma11}=\pm1$.
Such a decomposition for a long root is available in all root systems except $\Phi=C_l$.
By condition~\ref{cond1}, if $\Phi=C_l$, then $2$ is invertible in $A$.
Any long root $\alpha\in C_l$ is a sum of two short roots $\beta$ and $\gamma$.
Now, the formula $[x_\beta(pq),x_\gamma(r/2)]=x_\alpha(pqr)$ completes the proof.
\end{proof}

The rest of the section does not depend on condition~\ref{cond1}.

\begin{lemma}[{Vaserstein~\cite[Theorem~2]{VasersteinChevalley}}]\label{generators}
Under condition~\ref{cond} the group $\E(\Phi,A,I)$ is generated by the elements
$x_{-\alpha}(-r)x_\alpha(s)x_{-\alpha}(r)$,
for all $\alpha\in\Phi$, $r\in A$ and $s\in I$.
\end{lemma}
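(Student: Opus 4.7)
The plan is to set $N$ equal to the subgroup of $\E(\Phi,A)$ generated by all elements $z_\alpha(s,r) := x_{-\alpha}(-r)\,x_\alpha(s)\,x_{-\alpha}(r)$, $\alpha\in\Phi$, $s\in I$, $r\in A$, and to show $N = \E(\Phi,A,I)$. The inclusion $N\le\E(\Phi,A,I)$ is immediate since $z_\alpha(s,r) = x_\alpha(s)^{x_{-\alpha}(r)}$ is a conjugate, by an element of $\E(\Phi,A)$, of an element of $\E(\Phi,I)$, and $\E(\Phi,A,I)$ is by definition the normal closure of $\E(\Phi,I)$ in $\E(\Phi,A)$. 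For the reverse inclusion, setting $r=0$ gives $x_\alpha(s) = z_\alpha(s,0)\in N$, so $\E(\Phi,I)\le N$; it therefore suffices to prove that $N$ is normal in $\E(\Phi,A)$.

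Since $\E(\Phi,A)$ is generated by root unipotents $x_\beta(r')$, normality of $N$ reduces to the assertion
\[
z_\alpha(s,r)^{x_\beta(r')} \in N \qquad\text{for all } \alpha,\beta\in\Phi,\ s\in I,\ r,r'\in A,
\]
which I would establish by case analysis on the relative position of $\alpha$ and $\beta$. When $\beta = -\alpha$ the three $x_{-\alpha}$-factors collapse to give $z_\alpha(s,r)^{x_{-\alpha}(r')} = z_\alpha(s,r+r')\in N$. When $\beta\ne\pm\alpha$, I would rewrite $z_\alpha(s,r)^{x_\beta(r')} = x_\alpha(s)^{x_{-\alpha}(r)\,x_\beta(r')}$ and apply the Chevalley commutator formula to transpose $x_\beta(r')$ with $x_{-\alpha}(r)$. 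The resulting ``cross'' commutator $[x_{-\alpha}(r),x_\beta(r')]$ is a product of root elements $x_\delta(c_\delta r^i r'^j)$ indexed by roots $\delta = -i\alpha + j\beta$, none of which equals $\pm\alpha$. Pushing this product past $x_\alpha(s)$ via iterated Chevalley formulas splits into two kinds of contributions: factors carrying a power of $s\in I$ lie in $\E(\Phi,I)\le N$, while the $s$-free factors pair off against the other outer conjugator $x_{-\alpha}(r)$ and reassemble into elements of $N$. Finally, when $\beta = \alpha$, I would use Condition~\ref{cond} (specifically $\Phi\ne A_1$) to pick roots $\gamma,\delta\ne\pm\alpha$ with $\gamma+\delta = \alpha$, express $x_\alpha(r')$ via the commutator $[x_\gamma(p),x_\delta(q)]$ up to higher Chevalley terms, and reduce to the previous case.

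The main obstacle is the combinatorial bookkeeping in the case $\beta\ne\pm\alpha$: one must check that after fully expanding the Chevalley commutators, every ``$s$-free'' residue genuinely cancels pairwise between the outer conjugators $x_{-\alpha}(\pm r)$ rather than leaving a leftover factor outside $N$. This is precisely where Condition~\ref{cond} intervenes: the excluded configurations $\Phi = A_1$ (too few roots), and $\Phi = C_2$ or $G_2$ with residue fields of two elements (degenerate Chevalley constants or field cardinality), are exactly those for which the root system or structure constants cannot support the cancellation, and would require separate treatment.
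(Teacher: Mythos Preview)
The paper does not supply its own proof of this lemma; it is quoted verbatim from Vaserstein's paper as an established input. Your outline follows the natural strategy---show that the subgroup $N$ generated by the $z_\alpha(s,r)$ is already normal in $\E(\Phi,A)$---and this is indeed the shape of Vaserstein's argument.

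There is, however, a concrete gap in your treatment of the case $\beta=\alpha$. You propose to express $x_\alpha(r')$ (modulo root elements with roots $\ne\pm\alpha$) as a Chevalley commutator $[x_\gamma(p),x_\delta(q)]$ with $\gamma+\delta=\alpha$, thereby reducing to the case $\beta\ne\pm\alpha$. For this to recover $x_\alpha(r')$ for an \emph{arbitrary} $r'\in A$ you need the structure constant $N_{\gamma\delta,11}$ to be a unit. When $\Phi=C_l$ and $\alpha$ is a long root, the only decompositions $\alpha=\gamma+\delta$ have $\gamma,\delta$ short and $N_{\gamma\delta,11}=\pm 2$; there are no higher terms, so the commutator yields only $x_\alpha(2A)$. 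Condition~\ref{cond} does \emph{not} assume $2$ invertible for $C_l$ with $l\ge 3$, yet the lemma is asserted there, so your reduction does not cover that case. (Compare Lemma~\ref{decomposition} of the paper, where exactly this obstruction forces the separate symplectic treatment of Section~\ref{SpSection}.) Vaserstein's actual proof handles $\beta=\alpha$ by a different mechanism; you should consult the original.

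Your sketch for $\beta\ne\pm\alpha$ is also thinner than it needs to be: after expanding, the ``$s$-free'' factors must be reassembled into \emph{products of the specific generators} $z_\gamma(\cdot,\cdot)$, not merely shown to lie in $\E(\Phi,A,I)$ (that would be circular, since $N=\E(\Phi,A,I)$ is what you are proving). The cancellation you describe is plausible but is precisely the non-trivial part of Vaserstein's computation.
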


The next lemma follows easily from the Chevalley commutator formula.

\begin{lemma}\label{simple conjugation}
For any $\beta\ne\alpha\in\Phi$, $r\in A$ and $s\in I$ we have
$$x_{-\alpha}(-r)Y(\alpha,J,IJ)x_{-\alpha}(r)\subseteq\E(\Phi,J,IJ)$$.
\end{lemma}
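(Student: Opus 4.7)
The plan is to reduce the set-theoretic inclusion to checking what happens to a single generator of $Y(\alpha,J,IJ)$ under conjugation. Fix $\alpha\in\Phi$ and $r\in A$, put $h=x_{-\alpha}(r)$, and take a typical generator $y=x_\beta(-a)x_\gamma(s)x_\beta(a)$ with $\beta,\gamma\in\Phi\setminus\{\pm\alpha\}$, $a\in J$ and $s\in IJ$. Since conjugation by $h$ is a group homomorphism and $\E(\Phi,J,IJ)$ is a subgroup, I only need to prove $y^h\in\E(\Phi,J,IJ)$.

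Write $y^h=(x_\beta(a)^h)^{-1}\,x_\gamma(s)^h\,x_\beta(a)^h$. The central step is to analyse each of $x_\beta(a)^h$ and $x_\gamma(s)^h$ via the Chevalley commutator formula. Because $\beta,\gamma\ne\pm\alpha$, neither root is proportional to $-\alpha$, so the formula expresses $[x_\beta(a),h]$ and $[x_\gamma(s),h]$ as products of root unipotents supported on roots $i\beta-j\alpha$ respectively $i\gamma-j\alpha$ with $i,j\ge1$, whose parameters contain a factor $a^ir^j$ respectively $s^ir^j$. Since $a\in J$ and $J$ is an ideal, every parameter in the first commutator lies in $J$, so $x_\beta(a)^h=x_\beta(a)\,[x_\beta(a),h]$ lies in $\E(\Phi,J)$; analogously $s\in IJ$ forces $x_\gamma(s)^h\in\E(\Phi,IJ)$.

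To finish, I would read $y^h=(x_\gamma(s)^h)^{x_\beta(a)^h}$ as the conjugate of an element of $\E(\Phi,IJ)$ by an element of $\E(\Phi,J)$. Since $\E(\Phi,J,IJ)$ is by definition the normal closure of $\E(\Phi,IJ)$ in $\E(\Phi,J)$, the conjugate lies in $\E(\Phi,J,IJ)$ and the lemma follows. The one point that calls for care is ideal membership of the higher-order Chevalley terms that appear in $G_2$, $F_4$, $B_l$ or $C_l$ (with parameters like $a^2r$, $s^2r^3$, etc.); this is harmless because each such term still carries at least one factor from the relevant ideal, and $J$ together with $IJ$ are closed under multiplication by arbitrary elements of $A$.
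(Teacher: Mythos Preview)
Your proof is correct and follows the same approach as the paper: reduce to a single generator, then recognise the conjugated generator as an element of $\E(\Phi,IJ)$ conjugated by an element of $\E(\Phi,J)$. In fact you spell out the step the paper merely asserts---namely, that $x_\beta(a)^h\in\E(\Phi,J)$ and $x_\gamma(s)^h\in\E(\Phi,IJ)$ via the Chevalley commutator formula---so your write-up is more complete than the original.
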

\begin{proof}
Any element in $x_{-\alpha}(-r)Y(\alpha,J,IJ)x_{-\alpha}(r)$ is $x_{-\alpha}(-r)x_\beta(-u)x_\gamma(s)x_\beta(u)x_{-\alpha}(r)$ with
$r\in A,u\in J,s\in IJ$ which also is an element in the subgroup of $\E(\Phi,IJ)$ conjugated by $\E(\Phi,J)$ and hence is in
$\E(\Phi,J,IJ)$.
\end{proof}

\begin{proof}[Proof of Lemma~$\ref{inclusion}$]
By Lemma~\ref{generators} it suffices to prove that
$x_{-\alpha}(-r)x_\alpha(s)x_{-\alpha}(r)\in \E(\Phi,J,IJ)$ for
all $\alpha\in\Phi$, $s\in IJ^2$, and $r\in A$. By Lemma~\ref{decomposition}
$x_\alpha(s)\in Y(\alpha,J,IJ)$.
Now, the result follows from Lemma~\ref{simple conjugation}.
\end{proof}

\section{Symplectic group}\label{SpSection}

In this section we make necessary changes in Lemmas~\ref{RelInc}, \ref{moving t}, \ref{decomposition},
and~\ref{inclusion} for the exceptional case $\Phi=C_l$.
For the same, we need to distinguish between $J^m$ and
$J\supb m$, which denotes the ideal generated by $r^m$ for all
$r\in J$. The following lemma is commonly known. We give a one row proof for
completeness.

\begin{lemma}\label{r2-r}
Suppose that a ring $A$ has no residue fields of $2$ elements. Then the
ideal generated by $r^2-r$ for all $r\in A$ coincides with the whole ring.
\end{lemma}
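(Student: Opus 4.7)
The plan is to show the ideal $I\subseteq A$ generated by all $r^2-r$ is not contained in any maximal ideal, whence $I=A$. Take an arbitrary maximal ideal $\m\subset A$ and set $k=A/\m$. By hypothesis $k$ has more than two elements, so there exists $\bar r\in k$ with $\bar r\neq 0$ and $\bar r\neq 1$; then $\bar r^{\,2}-\bar r=\bar r(\bar r-1)$ is a product of two units of $k$, hence nonzero. Lifting $\bar r$ to any preimage $r\in A$ gives $r^2-r\notin\m$, so $I\not\subseteq\m$.

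Since this holds for every maximal ideal of $A$, the ideal $I$ cannot be proper, and therefore $I=A$. This is exactly the one-row argument the authors allude to: the only obstruction to $r^2-r$ being a unit modulo $\m$ is that every element of $A/\m$ is idempotent, which forces $A/\m=\mathbb F_2$.

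There is no real obstacle here beyond noting that in a field $k$ the polynomial $x^2-x$ has only the roots $0$ and $1$, so if every element of $k$ satisfies $x^2=x$ then $|k|\le 2$. The entire content of the lemma is contained in this observation plus the standard reduction to maximal ideals.
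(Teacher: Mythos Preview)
Your proof is correct and is essentially the same contrapositive argument as in the paper: both reduce to the observation that if $I$ were contained in a maximal ideal $\m$, then every element of the field $A/\m$ would be idempotent, forcing $A/\m=\mathbb F_2$. The only cosmetic difference is that you phrase it directly (exhibiting an $r$ with $r^2-r\notin\m$) while the paper phrases it by contradiction.
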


\begin{proof}
Suppose that there exists a maximal ideal $\m$ such that for all $r\in A$, we have $r^2-r\in\m$. Then $A/\m$ is a
field consisting of idempotents. Hence, $A/\m=\mathbb F_2$, a contradiction.
\end{proof}

The following statement substitutes Lemma~\ref{RelInc}. The idea of the proof is borrowed
from~\cite{HVZrelachev}.

\begin{lemma}\label{SpRelInc}
Under condition~\ref{cond} we have
$\E(\Phi,A,I\supb2J+2IJ+IJ\supb2)\le [\E(\Phi,A,I),\E(\Phi,A,J)]$.
\end{lemma}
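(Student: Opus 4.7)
The plan is to show $x_\alpha(t) \in H := [\E(\Phi,A,I),\E(\Phi,A,J)]$ for each $\alpha \in C_l$ and each element $t$ in a generating set of $L := I\supb2 J + 2IJ + IJ\supb2$. Since $H$ is normal in $\E(\Phi,A)$ and any $A$-scalar absorbs into the $I$- or $J$-factor (e.g.\ $ar^2s = r^2(as)$ with $as \in J$), $L$ is additively generated by $r^2 s$, $2rs$, $rs^2$ with $r \in I$, $s \in J$. The principal Chevalley commutator identities in $C_l$ (for $i \ne j$) are
\begin{align*}
(1)\quad & [x_{e_i-e_j}(r), x_{e_i+e_j}(s)] = x_{2e_i}(\pm 2 rs),\\
(2)\quad & [x_{e_i-e_j}(r), x_{2e_j}(s)] = x_{e_i+e_j}(\epsilon_1 rs)\cdot x_{2e_i}(\epsilon_2 r^2 s),\\
(3)\quad & [x_{2e_j}(r), x_{e_i-e_j}(s)] = x_{e_i+e_j}(\epsilon_1' rs)\cdot x_{2e_i}(\epsilon_2' rs^2),
\end{align*}
with signs $\epsilon_*,\epsilon_*' \in \{\pm 1\}$; the $\pm 2$ in (1) arises because two short roots sum to a long root in $C_l$.

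Formula (1) gives $x_{2e_i}(2rs) \in H$ immediately, and Weyl conjugation by $w_\beta = x_\beta(1)x_{-\beta}(-1)x_\beta(1) \in \E(\Phi,A)$ spreads this across all long roots. The remaining task is to show that $x_{e_i+e_j}(rs) \in H$ for every short root $e_i+e_j$ and every $r \in I$, $s \in J$: once this is in hand, (2) forces $x_{2e_i}(r^2 s) \in H$ and (3) forces $x_{2e_i}(rs^2) \in H$, while the short-root cases at levels $r^2 s$ and $rs^2$ reduce to the level-$IJ$ case by noting $r^2 \in I$ and $s^2 \in J$.

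For $l \ge 3$ this is immediate: pick $k \notin \{i,j\}$, and the higher Chevalley terms in $[x_{e_i-e_k}(r), x_{e_k+e_j}(s)]$ vanish because neither $2(e_i-e_k)+(e_k+e_j)$ nor $(e_i-e_k)+2(e_k+e_j)$ is a root, so the commutator equals $x_{e_i+e_j}(\pm rs) \in H$. The main obstacle is $l = 2$, where no third index exists. Here Condition~\ref{cond} supplies, via Lemma~\ref{r2-r}, a relation $1 = \sum_k b_k(c_k^2-c_k)$ with $b_k,c_k \in A$. Applying (2) to the parameter pairs $(cr,s)$ and $(r,cs)$ (both in $I \times J$, for any $c \in A$) produces two elements of $H$:
\[
U := x_{e_1+e_2}(\epsilon_1 crs)\cdot x_{2e_1}(\epsilon_2 c^2 r^2 s), \qquad V := x_{e_1+e_2}(\epsilon_1 crs)\cdot x_{2e_1}(\epsilon_2 c r^2 s).
\]
The root subgroups for $e_1+e_2$ and $2e_1$ commute, since their sum $3e_1+e_2$ is not a root, so $UV^{-1} = x_{2e_1}(\epsilon_2(c^2-c)r^2 s) \in H$ for every $c \in A$. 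An $A$-scalar absorbs into the $J$-factor via $b(c^2-c)r^2 s = (c^2-c)r^2(bs)$, yielding $x_{2e_1}(b(c^2-c)r^2 s) \in H$ for every $b \in A$; summing with $r^2 s = \sum_k b_k(c_k^2-c_k)r^2 s$ gives $x_{2e_1}(r^2 s) \in H$. Substituting back into (2) then produces $x_{e_1+e_2}(rs) \in H$, which Weyl conjugation spreads to the remaining short roots.

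The main obstacle is precisely this $l=2$ case, in which identity (2) inherently couples short and long root contributions; the $(cr,s)$ versus $(r,cs)$ comparison decouples the quadratic term, and Lemma~\ref{r2-r} then provides the universal scalar identity that reconstructs $r^2 s$ inside $H$. Once short roots are settled, identities (2) and (3) dispatch the remaining long-root cases and the lemma follows.
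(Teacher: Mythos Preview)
Your argument is correct and follows essentially the same route as the paper: reduce to root elements, handle short roots via an $A_2$-type commutator when $l\ge 3$, and for $l=2$ compare two instances of the Chevalley formula (your $(cr,s)$ versus $(r,cs)$ plays the same role as the paper's $(sp,rq)$ versus $(srp,q)$) so that Lemma~\ref{r2-r} isolates the long-root contribution. The only omission is that you never say why it suffices to treat $\Phi=C_l$; the paper dispatches all other root systems in one line by invoking Lemma~\ref{RelInc}, since $I\supb2J+2IJ+IJ\supb2\subseteq IJ$.
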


\begin{proof}
In view of Lemma~\ref{RelInc} it suffices to consider the case $\Phi=C_l$.
Denote by $H$ the right hand side of the inclusion.
Since $H$ is normal in $\E(C_l,A)$, it remains to show that
$x_\alpha(r)$ belongs to the right hand side for all $\alpha\in C_l$ and
$r\in I\supb2J+2IJ+IJ\supb2$.

If $l\ge3$, then any short root lies in a subsystem of roots of type $A_2$.
For $A_2$ we have even stronger inclusion~(\ref{RelInclusion}).

Now, let $\alpha$ be a long root. Then there exists a long root $\beta$ and
a short root $\gamma$ such that $\alpha=\beta+2\gamma$. Note that $\beta+\gamma$ is
a short root. By the
formula $x_\alpha(2pq)=[x_{\beta+\gamma}(\pm p),x_\gamma(\pm q)]$ with $p\in I$ and $q\in J$
we get $x_\alpha(2IJ)\le H$. Then, apply the relation
$x_\alpha(pq^2)=[x_\beta(\pm p),x_\gamma(\pm q)]x_{\beta+\gamma}(\pm pq)$.
By the first paragraph of the proof $x_{\beta+\gamma}(\pm pq)\in H$.
Clearly, the first factor of the right hand side lies in $H$ provided that
$p\in I$, $q\in J$ or $q\in I$, $p\in J$. This shows that
$x_\alpha(I\supb2J)\le H$ and $x_\alpha(IJ\supb2)\le H$.

Now, let $l=2$, $\alpha$ and $\beta$ are long root, $\gamma$ is a short root, and
$\alpha=\beta+2\gamma$. Take $p\in I$, $q\in J$ and $r,s\in A$. Write Chevalley commutator formulas
\begin{align*}
[x_\beta(\pm sp),x_\gamma(\pm rq)]&=x_{\beta+\gamma}(\pm rpq)x_\alpha(sr^2pq^2)\in H\text{ and}\\
[x_\beta(\pm srp),x_\gamma(\pm q)]&=x_{\beta+\gamma}(\pm rpq)x_\alpha(srpq^2)\in H.
\end{align*}
The difference between right hand sides of these formulas is equal to
$x_\alpha\bigl(s(r^2-r)pq^2\bigr)$ and lies in $H$ for all $r\in R$.
By Lemma~\ref{r2-r} $x_\alpha(pq^2)\in H$. Similarly, $x_\alpha(qp^2)\in H$.
The proof of inclusion $x_\alpha(2IJ)\le H$ is the same as for $l\ge 3$.

Finally, displayed formulas above with $r=s=1$ shows that $x_{\beta+\gamma}(pq)\in H$
as the last factor is in~$H$.
\end{proof}

The next result is a substitute for Lemma~\ref{moving t}.
The proof is essentially the same.

\begin{lemma}\label{SpMoving t}
Let $I,J,K$ be ideals of $A$. Then
$\bar E_{IJJ\supb2,K}\le\E(\Phi,A,IJK)\cdot E_{IJ,KJ}$.
\end{lemma}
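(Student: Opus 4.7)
The plan is to mimic the proof of Lemma~\ref{moving t}, replacing its single appeal to Lemma~\ref{RelInc} (unavailable in the symplectic exceptional case) by the substitute provided by Lemma~\ref{SpRelInc}. As a preliminary simplification, observe that $\bar E_{IJJ\supb2,K}=E_{IJJ\supb2,K}\cdot\E(\Phi,A,IJJ\supb2 K)$, and the second factor already lies in $\E(\Phi,A,IJK)$ because $J\supb2\subseteq J$. Thus it suffices to prove
$$E_{IJJ\supb2,K}\le\E(\Phi,A,IJK)\cdot E_{IJ,KJ}.$$

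The first step is to feed Lemma~\ref{SpRelInc} the pair of ideals $IJ$ and $J$ in place of $I$ and $J$. The term $IJ\cdot J\supb2$ is one of the three summands produced by that lemma, so we obtain
$$\E(\Phi,A,IJJ\supb2)\le[\E(\Phi,A,IJ),\E(\Phi,A,J)],$$
which is exactly the inclusion opening the proof of Lemma~\ref{moving t}. From here the argument proceeds identically: commuting both sides with $\E(\Phi,A,K)$ and applying the 3~subgroup lemma~(\ref{3sbg}) to the resulting triple commutator, followed by Lemma~\ref{EEE} on each of the two factors (which only requires condition~\ref{cond}, hence is available here), one arrives at a bound of the form $E_{IJJ\supb2,K}\le\bar E_{KIJ,J}\cdot\bar E_{JK,IJ}$. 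The first $\bar E$ collapses into $\E(\Phi,A,IJK)$: $E_{KIJ,J}\subseteq\E(\Phi,A,KIJ)\subseteq\E(\Phi,A,IJK)$ by normality of $\E(\Phi,A,KIJ)$ in $\E(\Phi,A)$, and $\E(\Phi,A,KIJ\cdot J)\subseteq\E(\Phi,A,IJK)$ trivially. The second $\bar E$ equals $E_{IJ,KJ}\cdot\E(\Phi,A,IJ^2K)$, whose rightmost factor again sits inside $\E(\Phi,A,IJK)$. Normality of $\E(\Phi,A,IJK)$ in $\E(\Phi,A)$ then permits the final rearrangement to $\E(\Phi,A,IJK)\cdot E_{IJ,KJ}$.

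The only real obstacle is the opening replacement: Lemma~\ref{SpRelInc} controls only a proper subideal $L\supb2M+2LM+LM\supb2$ of the product $LM$, and this is precisely why the statement of the present lemma singles out the term $IJ\cdot J\supb2=IJJ\supb2$ in place of the full $IJ^2$ that appeared in Lemma~\ref{moving t}. Every subsequent step—the 3~subgroup lemma, Lemma~\ref{EEE}, and the normality rearrangement absorbing the $\E(\Phi,A,IJK)$ factors—goes through under condition~\ref{cond} alone, so no further modification of the argument is needed.
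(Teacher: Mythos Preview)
Your proposal is correct and follows essentially the same route as the paper's own proof: reduce $\bar E$ to $E$, invoke Lemma~\ref{SpRelInc} with the pair $(IJ,J)$ to get $\E(\Phi,A,IJJ\supb2)\le E_{IJ,J}$, apply the 3~subgroup lemma, and then collapse the two resulting double commutators into $\E(\Phi,A,IJK)\cdot E_{IJ,KJ}$. The only cosmetic difference is that you cite Lemma~\ref{EEE} for the last collapse whereas the paper cites Lemma~\ref{RelCommForm} directly; since Lemma~\ref{EEE} is itself an immediate consequence of Lemma~\ref{RelCommForm}, this is not a material distinction.
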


\begin{proof}
Clearly, $\E(\Phi,A,IJJ\supb2K)\le\E(\Phi,A,IJK)$.
By Lemma~\ref{SpRelInc} we have $\E(\Phi,A,IJJ\supb2)\le E_{IJ,J}$.
Now the proof almost coincides with the proof of Lemma~\ref{moving t}.
Namely, by the 3 subgroup lemma we have
$[E_{IJ,J},\E(\Phi,A,K)]\le[E_{IJ,K},\E(\Phi,A,J)]\cdot[E_{J,K},\E(\Phi,A,IJ)].$
Now, by Lemma~\ref{RelCommForm} the right hand side of the above formula is contained in
$\bar E_{IJK,J}\bar E_{JK,IJ}\le\E(\Phi,A,IJK)E_{JK,IJ}$.
\end{proof}

Now we take care of Lemma~\ref{inclusion}. First we prove a counterpart of
Lemma~\ref{decomposition}.

\begin{lemma}\label{SpDecomposition}
Let $\alpha\in\Phi$. Then $x_\alpha(IJJ\supb2)\le Y(\alpha,J,IJ)$.
\end{lemma}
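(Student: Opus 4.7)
The plan is to reduce to showing $x_\alpha(pqr^2)\in Y(\alpha,J,IJ)$ for all $p\in I$ and $q,r\in J$---such products additively generate $IJJ\supb2$---and then to split into cases by the length of~$\alpha$.

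When $\alpha$ is short, the proof of Lemma~\ref{decomposition} applies verbatim: a short root of $C_l$ always has a decomposition $\alpha=\beta+\gamma$ with $N_{\beta\gamma11}=\pm1$ (short plus long, or two short roots when $l\ge3$), so the argument there goes through without any use of invertibility of~$2$ and in fact yields the stronger inclusion $x_\alpha(IJ^2)\le Y(\alpha,J,IJ)$.

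When $\alpha$ is long, I fix a decomposition $\alpha=\beta+2\gamma$ with $\beta$ long and $\gamma$ short; this is always available in $C_l$, for instance $2e_i=2e_j+2(e_i-e_j)$. The Chevalley commutator formula used already in the proof of Lemma~\ref{SpRelInc} reads
\[
[x_\beta(a),x_\gamma(b)]=x_{\beta+\gamma}(N_1 ab)\,x_\alpha(N_2 ab^2),\qquad N_1,N_2=\pm1,
\]
so with $a=pq\in IJ$ and $b=r\in J$, solving for the $x_\alpha$-factor yields
\[
x_\alpha(N_2\,pqr^2)=x_{\beta+\gamma}(-N_1\,pqr)\cdot[x_\beta(pq),x_\gamma(r)].
\]
The first factor is a generator of $Y(\alpha,J,IJ)$ with trivial conjugator, since $\beta+\gamma\ne\pm\alpha$ and $pqr\in IJ$. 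For the commutator I would use the rewriting $[x_\beta(pq),x_\gamma(r)]=x_\beta(-pq)\cdot x_\beta(pq)^{x_\gamma(r)}$: the left factor is another generator with trivial conjugator and parameter $-pq\in IJ$, while $x_\beta(pq)^{x_\gamma(r)}=x_\gamma(-r)x_\beta(pq)x_\gamma(r)$ exactly matches the defining pattern of $Y(\alpha,J,IJ)$, with $r\in J$ in the conjugator slot and $pq\in IJ$ as the inner parameter.

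The subtle point is this last rewriting: the naive expansion of the commutator contains a standalone $x_\gamma(r)$ whose argument $r$ lies in $J$ but not in $IJ$, and so is not individually a generator of~$Y$. Absorbing it into the conjugator slot---which the definition of~$Y$ allows with any $J$-valued parameter---is the precise step at which the extra factor of $r\in J$ built into the level $IJJ\supb2$ (as opposed to $IJ^2$) is exploited; this is why the present lemma makes do with the weaker level even though the structure constant $\pm2$ that blocked the direct proof of Lemma~\ref{decomposition} in $C_l$ is avoided here by working at $\alpha=\beta+2\gamma$ instead of $\alpha=\beta+\gamma$.
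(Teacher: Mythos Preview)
Your proof is correct and is essentially the approach the paper intends: the paper's proof is just the one line ``the proof is the same as for the previous lemma replacing $I$ by $IJ$'', and you have filled in exactly those details, using for long roots the same decomposition $\alpha=\beta+2\gamma$ and the same Chevalley commutator identity as in Lemma~\ref{SpRelInc}, and for short roots invoking the part of Lemma~\ref{decomposition} that never required $2$ to be invertible.
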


\begin{proof}
In view of Lemma~\ref{decomposition} it suffices to consider the case $\Phi=C_l$.
The proof is the same as for the previous lemma replacing $I$ by $IJ$. Details are
left to the reader.
\end{proof}

\begin{lemma}\label{SpInclusion}
$\E(\Phi,A,IJJ\supb2)\le \E(\Phi,J,IJ)$.
\end{lemma}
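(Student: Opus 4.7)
The plan is to imitate verbatim the proof of Lemma~\ref{inclusion}, swapping in the symplectic-friendly decomposition lemma at the one place where it matters. The conclusion $\E(\Phi,A,IJJ\supb2)\le \E(\Phi,J,IJ)$ has the same shape as Lemma~\ref{inclusion}; only the ``size'' of the ideal on the left (now $IJJ\supb 2$ instead of $IJ^2$) has changed, and this change is exactly what Lemma~\ref{SpDecomposition} was designed to absorb.

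Concretely, I would proceed as follows. First, invoke Vaserstein's Lemma~\ref{generators}: under condition~\ref{cond}, the relative elementary group $\E(\Phi,A,IJJ\supb2)$ is generated by elements of the form
$$
x_{-\alpha}(-r)\,x_\alpha(s)\,x_{-\alpha}(r), \qquad \alpha\in\Phi,\ r\in A,\ s\in IJJ\supb 2.
$$
So it is enough to show each such generator lies in $\E(\Phi,J,IJ)$. Second, apply Lemma~\ref{SpDecomposition}: since $s\in IJJ\supb 2$, we have $x_\alpha(s)\in Y(\alpha,J,IJ)$. Third, observe that Lemma~\ref{simple conjugation} (whose statement and proof, by the paragraph preceding Lemma~\ref{generators}, do not depend on condition~\ref{cond1}) says precisely that $x_{-\alpha}(-r)\,Y(\alpha,J,IJ)\,x_{-\alpha}(r)\subseteq\E(\Phi,J,IJ)$. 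Combining these three steps gives the desired inclusion.

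There is essentially no obstacle: the three auxiliary lemmas have been arranged so that this assembly is mechanical. The only minor thing to check is that the tools invoked remain valid in the present ``exceptional'' setting. Lemma~\ref{generators} needs only condition~\ref{cond}, which is our standing assumption. Lemma~\ref{SpDecomposition} was proved without assuming $2$ is invertible and covers $\Phi=C_l$. Lemma~\ref{simple conjugation} was noted to be condition~\ref{cond1}-free. So all ingredients are legitimately available, and no additional commutator calculus is required beyond what is already done in Lemmas~\ref{SpRelInc} and~\ref{SpDecomposition}.
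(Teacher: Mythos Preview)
Your proposal is correct and matches the paper's proof essentially verbatim: invoke Lemma~\ref{generators} for generators, apply Lemma~\ref{SpDecomposition} to place $x_\alpha(s)$ in $Y(\alpha,J,IJ)$, then conclude via Lemma~\ref{simple conjugation}. Your remarks about which conditions are needed for each ingredient are also accurate.
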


\begin{proof}
By Lemma~\ref{generators} it suffices to prove that
$x_{-\alpha}(-r)x_\alpha(s)x_{-\alpha}(r)\in \E(\Phi,J,IJ)$ for
all $\alpha\in\Phi$, $s\in IJJ\supb2$ and $r\in A$. By Lemma~\ref{SpDecomposition}
$x_\alpha(s)\in Y(\alpha,J,IJ)$.
Now, the result follows from Lemma~\ref{simple conjugation}.
\end{proof}

\section{Relative dilation principle}\label{dilation}

In this section, we introduce a new indeterminate $t$. The next statement follows immediately from Lemmas~\ref{SpMoving t}
and~\ref{SpInclusion}.

\begin{cor}\label{ForDilation}
Let $K$ be an ideal in $A$.
Under condition~$\ref{cond}$ we have $\bar E_{K,At^9}\le\E(\Phi,At,Kt)$.
\end{cor}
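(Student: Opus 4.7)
The plan is to apply Lemmas~\ref{SpMoving t} and~\ref{SpInclusion} in succession, working in the polynomial ring $A[t]$. The key computational observation is that for any monomial ideal $J = At^k$ of $A[t]$, we have $J\supb{2} = J^2 = At^{2k}$, because $t^{2k} = (1 \cdot t^k)^2$ already lies in the generating set of $J\supb{2}$. This is what will make the two lemmas applicable to the principal ideal $At^9$, via the factorizations $t^9 = t^3 \cdot (t^3)^2$ and $t^3 = t \cdot t^2$. I will also use the trivial symmetry $\bar E_{K, At^9} = \bar E_{At^9, K}$ so that $At^9$ sits in the first subscript, where the lemmas expect it.

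The outer application is Lemma~\ref{SpMoving t} with $I = A[t]$, $J = At^3$, and $K$ extended to $KA[t]$. Then $IJJ\supb{2} = A[t] \cdot t^3 A[t] \cdot t^6 A[t] = At^9$, while both $IJK$ and $KJ$ simplify to $Kt^3$. The lemma thus yields
\[
\bar E_{At^9, K} \le \E(\Phi, A[t], Kt^3) \cdot E_{At^3, Kt^3}.
\]
It then remains to show that each of these two factors is contained in $\E(\Phi, At, Kt)$.

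For the first factor, I apply Lemma~\ref{SpInclusion} with $I = K$ and $J = At$, so that $IJJ\supb{2} = K \cdot tA[t] \cdot t^2A[t] = Kt^3$; the conclusion reads $\E(\Phi, A[t], Kt^3) \le \E(\Phi, At, Kt)$. For the second factor, I use that $\E(\Phi, A[t], Kt^3)$ is by definition normal in $\E(\Phi, A[t])$ and that $\E(\Phi, A[t], At^3) \le \E(\Phi, A[t])$; the general fact that $[H, N] \le N$ for any subgroup $H$ of a group containing the normal subgroup $N$ gives
\[
E_{At^3, Kt^3} \le \E(\Phi, A[t], Kt^3) \le \E(\Phi, At, Kt),
\]
where the last inclusion is the first-factor bound just established.

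I do not anticipate a genuine obstacle: the exponent $9$ in the statement is precisely dictated by the two factorizations $t^9 = t^3 \cdot (t^3)^2$ and $t^3 = t \cdot t^2$, and once the substitutions $(I,J) = (A[t], At^3)$ for Lemma~\ref{SpMoving t} and $(I,J) = (K, At)$ for Lemma~\ref{SpInclusion} are chosen, the argument reduces to concatenating the two inclusions. The only delicate point is the identification $J\supb{2} = J^2$ for the monomial ideals $At$ and $At^3$ of $A[t]$, which is where the presence of the constant polynomial $1 \in A[t]$ is tacitly used.
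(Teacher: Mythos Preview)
Your proof is correct and follows essentially the same route as the paper: apply Lemma~\ref{SpMoving t} with $I=A[t]$, $J=At^3$ to reduce to $\E(\Phi,A[t],Kt^3)$ (absorbing the factor $E_{At^3,Kt^3}$ via normality, exactly as you do), and then apply Lemma~\ref{SpInclusion} with $I=K$, $J=At$. You spell out a few points the paper leaves implicit---the symmetry $\bar E_{K,At^9}=\bar E_{At^9,K}$, the identity $(At^k)\supb{2}=At^{2k}$, and the ambient ring being $A[t]$---but the structure and substitutions are identical.
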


\begin{proof}
Lemma~\ref{SpMoving t} with $I=A$ and $J=At^3$ shows that $\bar E_{K,At^9}$ is contained in
$\E(\Phi,A,Kt^3)E_{At^3,Kt^3}\le\E(\Phi,A,Kt^3)$. The latter group is contained in
$\E(\Phi,At,Kt)$ by Lemma~\ref{SpInclusion}.
\end{proof}

Let $R$ be a ring and $s\in R$.
Using the relative splitting principle and the previous corollary, it is easy to see that
if $g(t)\in\E(\Phi,R_s[t],IR_s[t])\cap\G(\Phi,R_s[t],tR_s[t])$, then
$g(s^nt)$ lies in the image of $\E(\Phi,R[t],IR[t])$ under the localization
homomorphism for some $n\in\N$. But localization homomorphism is not injective in general.
The following lemma is to get around this problem.
For the general linear group this trick was noticed in~\cite{SuslinSerreConj}.
In~\cite[Lemma~14]{PetrovStavrovaIsotropic} this is stated for
all affine group schemes. However, we believe that our detailed proof may be useful for
non-specialists. Recall that $\lambda_s$ denotes the principal localization homomorphism
$R\to R_s$, where $s\in R$.

\begin{lemma}\label{ZeroDivisors}
Let $H$ be an algebraic group scheme. Let $g(t),h(t)\in H(R[t],tR[t])$ and $s\in R$ be
such that $\lambda_s(g)=\lambda_s(h)$. Then there exists $m\in\N$ such that $g(s^mt)=h(s^mt)$.
\end{lemma}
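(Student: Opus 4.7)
The idea is to pass to a faithful representation $H \hookrightarrow \GL_n$ (just as in the proof of Lemma~\ref{GIJ}) and work with matrix entries. An element of $H(R[t])$ is then determined by its $n^2$ entries in $R[t]$, and equality in $H(R[t])$ amounts to equality of all these entries. It therefore suffices to establish, for every pair of entries $g_{ij}(t), h_{ij}(t) \in R[t]$ of $g$ and $h$, that $g_{ij}(s^m t) = h_{ij}(s^m t)$ for some common $m$.

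Write $g_{ij}(t) = \sum_k a_{ij,k} t^k$ and $h_{ij}(t) = \sum_k b_{ij,k} t^k$ with $a_{ij,k}, b_{ij,k} \in R$, only finitely many nonzero. The congruence hypothesis $g, h \in H(R[t], tR[t])$ forces the constant terms to agree (both equal $\delta_{ij}$), so $a_{ij,0} = b_{ij,0}$. The hypothesis $\lambda_s(g) = \lambda_s(h)$ unpacks as $\lambda_s(a_{ij,k}) = \lambda_s(b_{ij,k})$ in $R_s$ for every triple $(i,j,k)$; hence for each such triple there is an integer $N_{ij,k} \in \N$ with $s^{N_{ij,k}}(a_{ij,k} - b_{ij,k}) = 0$ in $R$.

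Choose $m = \max N_{ij,k}$, the maximum taken over the finitely many triples with $a_{ij,k} \ne b_{ij,k}$; all of these have $k \ge 1$ by the previous paragraph. Then $mk \ge m \ge N_{ij,k}$, so $s^{mk}(a_{ij,k} - b_{ij,k}) = 0$ for every such triple. Since the substitution $t \mapsto s^m t$ multiplies the coefficient of $t^k$ by $s^{mk}$, the difference $g_{ij}(s^m t) - h_{ij}(s^m t)$ vanishes termwise, and reassembling entries gives $g(s^m t) = h(s^m t)$ in $H(R[t])$.

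The only delicate point is the indispensable role of the congruence hypothesis: the constant term of each entry is untouched by the substitution $t \mapsto s^m t$, so without $a_{ij,0} = b_{ij,0}$ no power of $s$ would kill the difference at $k=0$. Beyond this observation, the argument is pure coefficient bookkeeping, and I do not expect any serious obstacle.
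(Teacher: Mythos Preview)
Your proof is correct and follows the same strategy as the paper: pass to a faithful representation $H\hookrightarrow\GL_n$, reduce to equality of matrix entries, and use that the constant terms already agree while a single power $s^m$ annihilates all remaining coefficient differences. The only cosmetic difference is that the paper factors each entry as $g_{ij}-\delta_{ij}=t\tilde g_{ij}$ and uses that $t$ is a non-zero-divisor to pass from $s^m g_{ij}=s^m h_{ij}$ to $s^m\tilde g_{ij}=s^m\tilde h_{ij}$, whereas you expand into coefficients and use $mk\ge m$ for $k\ge1$; both devices accomplish exactly the same thing.
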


\begin{proof}
As in Lemma~\ref{GIJ} we consider a faithful representation of the group scheme $H$ and
identify elements of $H(R)$ with their images in $\GL_n(R)$ for any ring $R$.
Recall that $a_{ij}$ denotes the entry of a matrix $a$ in position $(i,j)$.
Note that $\lambda_s(g)=\lambda_s(h)$ implies that there exists $m\in\N$ such that
$s^mg_{ij}=s^mh_{ij}$ for all $i,j=1,\dots,n$. Since
$g,h\in H(R[t],tR[t])\subseteq\GL_n(R[t],tR[t])$,
we have $g_{ij}-\delta_{ij}=t\tilde g_{ij}$ and $h_{ij}-\delta_{ij}=t\tilde h_{ij}$ for some
$\tilde g_{ij},\tilde h_{ij}\in R[t]$. Since $t$ is not a zero divisor,
$s^m\tilde g_{ij}=s^m\tilde h_{ij}$ for all $i,j=1,\dots,n$ which implies the result.
\end{proof}

\begin{thm}[Relative Dilation Principle]\label{RelDilation}
Let $g(t)\in\G(\Phi,R[t],tR[t])$ be such that $\lambda_s(g)\in\E(\Phi,R_s[t],IR_s[t])$.
Then there exists $l\in\N$ such that $g(s^lt)\in\E(\Phi,R,IR[t])$.
\end{thm}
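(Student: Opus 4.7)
The plan has three stages: (i) use the relative splitting principle to identify $\lambda_s(g)$ as a finite product of elementary and commutator terms inside $\bar E_{IR_s[t], tR_s[t]}$; (ii) substitute $t\mapsto s^L t$ for $L$ large enough that the whole expression lifts to an element of $\E(\Phi, R[t], IR[t])\cap\G(\Phi, R[t], tR[t])$; (iii) apply Lemma~\ref{ZeroDivisors} to pass from equality after $\lambda_s$ to an actual equality in $\G(\Phi, R[t])$.

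Stage (i). The hypotheses give $\lambda_s(g)\in\E(\Phi, R_s[t], IR_s[t])\cap\G(\Phi, R_s[t], tR_s[t])$. Since $R_s[t]=R_s\oplus tR_s[t]$ and $IR_s[t]$ is extended from the ideal $IR_s$ of~$R_s$, Lemma~\ref{RelativeSplitting} yields $\lambda_s(g)\in\bar E_{IR_s[t], tR_s[t]}=E_{IR_s[t], tR_s[t]}\cdot\E(\Phi, R_s[t], tIR_s[t])$. I would unpack this as $\lambda_s(g)=c\cdot e$, where $c=\prod_j[x_j, y_j]^{\pm1}$ with $x_j\in\E(\Phi, R_s[t], IR_s[t])$ and $y_j\in\E(\Phi, R_s[t], tR_s[t])$, and $e\in\E(\Phi, R_s[t], tIR_s[t])$. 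The key feature is that only finitely many polynomials in $R_s[t]$ of bounded degree appear, together with their conjugators in $\E(\Phi, R_s[t])$.

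Stage (ii). Substitute $t\mapsto s^L t$ throughout. The $e$-part is straightforward: each elementary generator has argument in $tIR_s[t]$, hence vanishing at $t=0$, and for $L$ large every nonzero coefficient (originally in~$IR_s$) gets multiplied by a high enough power of~$s$ to land in $IR$, so the argument lifts to $tIR[t]$. For $c$ one must also handle the conjugators, each $b\in\E(\Phi, R_s[t])$ splitting uniquely as $b=b(0)\cdot\tilde b$ with $b(0)\in\E(\Phi, R_s)$ and $\tilde b\in\E(\Phi, R_s[t], tR_s[t])$; the constant part $b(0)$ is absorbed into the elementary generator it conjugates using Chevalley's commutator formula, producing new generators whose arguments are $t$-divisible (and therefore lift after $t\mapsto s^L t$). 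Collecting everything yields $\tilde g\in\E(\Phi, R[t], IR[t])$ with $\lambda_s(\tilde g)=\lambda_s(g(s^L t))$; that $\tilde g\in\G(\Phi, R[t], tR[t])$ follows because the $e$-part lies in $\E(\Phi, R[t], tIR[t])$ and each commutator in $c$ lies in $\G(\Phi, R[t], tIR[t])$ by Lemma~\ref{GIJ}.

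Stage (iii). Applying Lemma~\ref{ZeroDivisors} to the pair $g(s^L t),\tilde g\in\G(\Phi, R[t], tR[t])$ (which have equal images under $\lambda_s$) produces $m\in\N$ with $g(s^{L+m}t)=\tilde g(s^m t)$; the right-hand side lies in $\E(\Phi, R[t], IR[t])$ because this subgroup is stable under the substitution $t\mapsto s^m t$, so $l=L+m$ works. The principal obstacle is stage~(ii): a literal substitution $t\mapsto s^L t$ does not clear the \emph{constant} terms of polynomial coefficients, so it fails outright for the conjugators, whose constant parts live in $\E(\Phi, R_s)$ rather than $\E(\Phi, R)$. The Chevalley-commutator trick described above is what rescues this, since after absorption of $b(0)$ every residual argument is $t$-divisible, and a constant in $R_s$ multiplying a $t^k$ with $k\ge1$ is integralised by the extra factor $s^{Lk}$ introduced by the substitution.
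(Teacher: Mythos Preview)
Your stage~(ii) contains a genuine gap, and it is precisely the gap that the paper's Corollary~\ref{ForDilation} (hence all of Sections~\ref{CommForm}--\ref{SpSection}) is built to fill. Two things go wrong with the ``absorption via Chevalley's commutator formula'' manoeuvre.

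First, the Chevalley commutator formula does not apply to a pair of opposite roots. If $b(0)$ contains a factor $x_{-\alpha}(r)$ with $r\in R_s$, then $x_\alpha(u)^{x_{-\alpha}(r)}=x_{-\alpha}(-r)\,x_\alpha(u)\,x_{-\alpha}(r)$ does not decompose into root unipotents whose arguments each carry a factor of~$u$; it is exactly the Vaserstein-type generator of Lemma~\ref{generators}, and the outer factors $x_{-\alpha}(\pm r)$ keep their denominators. This already breaks your treatment of the $e$-part, which you call straightforward but which has the same conjugator problem. Overcoming this opposite-root obstruction is the content of Lemmas~\ref{decomposition} and~\ref{simple conjugation}: one first rewrites $x_\alpha(u)$ as a product avoiding $\pm\alpha$, at the cost of extra powers of the ideal, and only then conjugates by $x_{-\alpha}(r)$.

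Second, for the commutator factors $[x_j,y_j]$ the situation is worse. The generators of $x_j\in\E(\Phi,R_s[t],IR_s[t])$ are $x_\alpha(v)^b$ with $v\in IR_s[t]$, and $v$ need \emph{not} be $t$-divisible: it may be a nonzero constant in $IR_s\setminus IR$. Conjugating such a constant generator by the constant $b(0)\in\E(\Phi,R_s)$ produces another constant element, so it certainly cannot be rewritten as a product of root unipotents with $t$-divisible arguments; and the substitution $t\mapsto s^Lt$ fixes constants, so no choice of $L$ integralises it. You therefore cannot lift $x_j$ this way, and you offer no alternative mechanism for lifting the whole commutator $[x_j,y_j]$.

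The paper sidesteps both problems by introducing a second variable~$t'$, applying the splitting principle with respect to $t'$ rather than~$t$, and then invoking Corollary~\ref{ForDilation} to obtain $f(t,{t'}^9)\in\E(\Phi,t'R',t'IR')$ with $R'=R_s[t,t']$. By the very definition of this group, every conjugator lies in $\E(\Phi,t'R')$ and every generator argument lies in $t'IR'$: \emph{all} elementary symbols appearing are $t'$-divisible, so specialising $t'\mapsto s^m$ for large~$m$ clears every denominator simultaneously. The passage from $\bar E_{IR',t'R'}$ to $\E(\Phi,t'R',t'IR')$---pushing the $t'$-divisibility from the level ideal into the conjugators---is the nontrivial step; it costs the ninth power of~$t'$ and rests on Lemmas~\ref{SpMoving t} and~\ref{SpInclusion}. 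Your proposal bypasses this step entirely, and the Chevalley formula alone cannot substitute for it.
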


\begin{proof}
Let $t'$ be another independent variable and $R'=R_s[t,t']$.
Put $f(t,t')=\lambda_s\bigl(g(tt')\bigr)$.
Then $f(t,t')\in\E(\Phi,R',IR')\cap\G(\Phi,R',tR')$.
By Lemma~\ref{RelativeSplitting} $f(t,t')\in\bar E_{IR',t'R'}(\Phi,R')$ and
by Corollary~\ref{ForDilation} $f(t,{t'}^9)\in\E(\Phi,t'R',t'IR')$.
By the definition of the latter group,
$$
f(t,{t'}^9)=\prod_{j}x_{\alpha_j}\left( \frac{t'v_j}{s^{k_j}}\right)^{a_j}, \text{ where }
a_j=\prod_i x_{\alpha_{ji}}\left( \frac{t'u_{ji}}{s^{k_{ji}}}\right),
$$

\noindent
$k_j,k_{ji}\in\N$, $u_{ji}\in R[t,t']$, $v_j\in IR[t,t']$ and $\alpha_j,\alpha_{ji}\in\Phi$.
Clearly, if $m\ge k_j,k_{ji}$ for all $j,i$, then
$$f\bigl(t,(s^mt')^9\bigr)\in\lambda_s\bigl(\E(\Phi,R[t,t'],It'R[t,t'])\bigr).$$

Take $n\ge 9\max(k_j,k_{ij})$ and put $t'=1$ to get
$\lambda_s\bigl(g(ts^n)\bigr)=f(t,s^n)=\lambda_s\bigl(h(t)\bigr)$ for some
$h(t)\in\E(\Phi,R[t],I[t])$.
By Lemma~\ref{ZeroDivisors} there exists $k\in\N$ such that
$g\bigl(ts^ks^n\bigr)=h(ts^k)\in\E(\Phi,R[t],IR[t])$, which completes the proof.
\end{proof}
\section{Relative Local-Global principle}

The proof of the LGP follows the same way as in Suslin's original proof.
However, the exposition is different. Namely, we do not use induction but introduce a
``generic ring'' for the problem. For us these arguments seem to be a little bit easier,
but this is a matter of taste.

Recall that $\lambda_\m$ denotes the localization homomorphism at a maximal ideal $\m$.

\begin{thm}\label{LG}
Let $I$ be an ideal of a commutative ring $R$ and $g=g(X)\in\G(\Phi,R[X],XR[X])$. Suppose that
$\lambda_\m(g)\in\E(\Phi,R_\m[X],IR_\m[X])$ for every maximal ideal $\m$ of $R$.

Then, $g\in\E(\Phi,R[X],IR[X])$.
\end{thm}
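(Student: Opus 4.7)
The plan is to apply the Relative Dilation Principle (Theorem~\ref{RelDilation}) not to $g$ itself but to an auxiliary two-variable element, and to assemble local information into a global statement through a single \emph{ideal} of $R$ rather than a partition-of-unity induction. Introduce a fresh variable $Y$ and set
$$h(X,Y)=g(X+Y)\,g(Y)^{-1}\in\G\bigl(\Phi,R[X,Y],XR[X,Y]\bigr).$$
Define
$$T=\bigl\{\,s\in R:\;h(sX,Y)\in\E(\Phi,R[X,Y],IR[X,Y])\,\bigr\}.$$
The goal will be to show $T=R$, for then $1\in T$ and substituting $Y=0$ in $h(X,Y)$ recovers $g(X)\in\E(\Phi,R[X],IR[X])$.

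First I would check that $T$ is an ideal of $R$. The ring endomorphisms $X\mapsto rX$ (for $r\in R$) and $Y\mapsto s'X+Y$ of $R[X,Y]$ both send $IR[X,Y]$ into itself, so the induced group homomorphisms preserve $\E(\Phi,R[X,Y],IR[X,Y])$. Closure of $T$ under multiplication by $R$ follows by applying $X\mapsto rX$ to the containment defining membership in $T$. For additivity, use the factorisation
$$h\bigl((s+s')X,Y\bigr)=h\bigl(sX,\,s'X+Y\bigr)\cdot h(s'X,Y),$$
noting that the first factor is obtained from $h(sX,Y)$ by the substitution $Y\mapsto s'X+Y$ and so lies in the relative elementary subgroup whenever $s\in T$.

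Next I would show $T\not\subseteq\m$ for every maximal ideal $\m$ of $R$. By hypothesis $\lambda_\m(g)$ is a finite product of generators $x_\alpha(v)^{b}$ with $v\in IR_\m[X]$ and $b\in\E(\Phi,R_\m[X])$; clearing the finitely many denominators that occur, there exists $s\in R\setminus\m$ with $\lambda_s(g)\in\E(\Phi,R_s[X],IR_s[X])$. Hence $\lambda_s\bigl(h(X,Y)\bigr)\in\E(\Phi,R_s[X,Y],IR_s[X,Y])$. Viewing $h$ as an element of $\G(\Phi,R[Y][X],XR[Y][X])$ and applying Theorem~\ref{RelDilation} with base ring $R[Y]$ (using the canonical identification $R[Y]_s=R_s[Y]$), one obtains $l\in\N$ such that $h(s^lX,Y)\in\E(\Phi,R[X,Y],IR[X,Y])$, i.e.\ $s^l\in T$. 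In particular $T\not\subseteq\m$.

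Since $T$ meets the complement of every maximal ideal, $T=R$, which completes the proof. The only slightly delicate point is the dilation step: one must check that Theorem~\ref{RelDilation} remains applicable after the base ring is enlarged from $R$ to $R[Y]$, but this is immediate because localisation at $s\in R$ commutes with the adjunction of $Y$ and because the functor $\E(\Phi,-,I(-))$ respects ring homomorphisms that carry $I$ into $I$.
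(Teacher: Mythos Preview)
Your proof is correct and takes a genuinely different route from the paper's. You use the classical Quillen--Suslin \emph{transport ideal} device: set $h(X,Y)=g(X+Y)g(Y)^{-1}$, prove that $T=\{s\in R:\;h(sX,Y)\in\E(\Phi,R[X,Y],IR[X,Y])\}$ is an ideal via the telescoping identity $h((s+s')X,Y)=h(sX,s'X+Y)\,h(s'X,Y)$, and show $T\not\subseteq\m$ by clearing denominators and invoking the Relative Dilation Principle over the enlarged base $R[Y]$. The paper, by contrast, first extracts a finite unimodular sequence $s_1,\dots,s_m$ by the same denominator-clearing step, then passes to the ``generic ring'' $A=R[X,t_1,\dots,t_m]/(t_1+\dots+t_m-1)$, writes $g=g_1\cdots g_m$ with $g_k\in\G(\Phi,A,t_kA)$, applies Theorem~\ref{RelDilation} to each $g_k$ over $R^{(k)}=R[X,t_i:i\ne k,j]$, and finally specializes $t_k\mapsto s_k^{l_k}p_k$ for a suitable unimodular relation $\sum s_k^{l_k}p_k=1$. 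Both arguments reduce everything to a single application of the dilation principle over a polynomial extension of $R$; your version is closer to Suslin's original and avoids the auxiliary many-variable ring and the explicit telescoping decomposition of $g$, while the paper's version trades the ideal argument for a single specialization at the end. The one point worth making explicit in your write-up is that Condition~\ref{cond} is inherited by $R[Y]$ (a residue field $\mathbb F_2$ of $R[Y]$ would restrict to one of $R$), so Theorem~\ref{RelDilation} is indeed available over $R[Y]$.
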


\begin{proof}
Denote by $S$ the set of all $s\in R$ such that $\lambda_{s}(g)\in\E(\Phi,R_s[X],IR_s[X])$.
We show that $S$ is unimodular.
Indeed, given a maximal ideal $\m$ of $R$ we can write $\lambda_\m(g)$ as a product
$$
\lambda_\m(g)=\prod_{j}x_{\alpha_j}\left( \frac{v_j}{r_j}\right)^{a_j}, \text{ where }
a_j=\prod_i x_{\alpha_{ji}}\left( \frac{u_{ji}}{r_{ji}}\right),
$$

\noindent
$r_j,r_{ji}\in R\setminus\m$, $u_{ji}\in R[X]$, $v_j\in IR[X]$ and $\alpha_j,\alpha_{ji}\in\Phi$.

Multiplying $r_j$ and $r_{ij}$ over all $i,j$ we get an element $s\in R\setminus\m$.
It is easy to see that $s\in S$.
Therefore, $S$ is not contained in any maximal ideal of $R$, hence it is unimodular.
Now, $1$ is a linear combination of a finite subset of elements $s_1,\dots,s_m\in S$ with
coefficients from $R$. Thus, we obtain a unimodular sequence $s_1,\dots,s_m$
such that
$$
\lambda_{s_k}(g)\in\E(\Phi,A_{s_k}[X],IA_{s_k}[X]) \text{ for all }k=1,\dots,m.
$$

Consider the ring $A=R[X,t_1,\dots,t_m]/(t_1+\dots+t_m=1)$.
Since $R[X]$ is a subring of $A$, we can view $g$ as an element of $\G(\Phi,A,XA)$.
Put
$$
g_1=g\Bigl(Xt_1\Bigr) \text{ and } g_k=g\Bigl(X\sum_{i=1}^{k-1}t_i\Bigr)^{-1}g\Bigl(X\sum_{i=1}^kt_i\Bigr)
\text{ for } k=2,\dots,m.
$$
Then $g_k\in\G(\Phi,A,t_kA)$ for all $k$ and $g=g_1\dots g_m$. Fix $k\in\{1,\dots,m\}$.
Note that $A=R^{(k)}[t_k]$ is a polynomial ring, where
$R^{(k)}=R[X,t_i\mid i\ne k,j]$ and $j$ is some index distinct from $k$
(in this case we view $t_j$ as an abbreviation of $1-\sum_{i\ne j}t_i$).
Since for any $s\in R$ localization homomorphism $\lambda_s$ commutes with evaluation homomorphisms
$X\mapsto X\sum_{i=1}^kt_i$, we have $\lambda_{s_k}(g_k)\in\E(\Phi,R^{(k)}_{s_k}[t_k],IR^{(k)}_{s_k}[t_k])$.
By Theorem~\ref{RelDilation} with $R=R^{(k)}$ and $t=t_k$, there exists $l_k\in\N$ such that
$g_k(s_k^{l_k}t_k)\in\E(\Phi,R^{(k)}[t_k],IR^{(k)}[t_k])$.

The sequence $s_1^{l_1},\dots,s_m^{l_m}$ is a unimodular sequence of elements of $R$
(otherwise all $s_i^{l_i}$ would lie in the same maximal ideal $\m$ which would imply that
all $s_i$'s are in $\m$, a contradiction).
Fix $p_1,\dots,p_m\in R$ such that $1=\sum_{k=1}^m s_k^{l_k}p_k$.
Denote by $\eps_k:A\to R^{(k)}$ the evaluation homomorphism sending $t_k$ to $s_k^{l_k}p_k$.
Then $\eps_k(g_k)\in\E(\Phi,R^{(k)},IR^{(k)})$.

Now, let $\eps:A\to R[X]$ be the homomorphism identical on $R[X]$ sending $t_k$ to $s_k^{l_k}p_k$
for all $k=1,\dots,m$
(by the choice of $p_1,\dots,p_m$ such a homomorphism exists).
Since $\eps$ factors through $\eps_k$ for each $k$, we have $\eps(g_k)\in\E(\Phi,A[X],IA[X])$,
hence $g=\eps(g)\in\E(\Phi,A[X],I[X])$, which completes the proof.
\end{proof}

\bibliographystyle{amsplain}
\bibliography{stepanov}

\providecommand{\bysame}{\leavevmode\hbox to3em{\hrulefill}\thinspace}
\providecommand{\MR}{\relax\ifhmode\unskip\space\fi MR }
\providecommand{\MRhref}[2]{%
  \href{http://www.ams.org/mathscinet-getitem?mr=#1}{#2}
}
\providecommand{\href}[2]{#2}
\begin{thebibliography}{10}

\bibitem{AbeLGP}
E.~Abe, \emph{{W}hitehead groups of {C}hevalley groups over polynomial rings},
  Comm. Algebra \textbf{11} (1983), no.~12, 1271--1307.

\bibitem{ApteChattRao}
H.~Apte, P.~Chattopadhyay, and R.~Rao, \emph{A local global theorem for
  extended ideals}, J. Ramanujan Mathematical Society \textbf{27} (2012),
  no.~1, 17--30.

\bibitem{BakNonabelian}
A.~Bak, \emph{Nonabelian {K}-theory: {T}he nilpotent class of
  {$\mathrm{K}_{1}$} and general stability}, K-theory \textbf{4} (1991),
  363--397.

\bibitem{BHVstrike}
A.~Bak, R.~Hazrat, and N.~Vavilov, \emph{Localization-completion strikes again:
  relative {$K_1$} is nilpotent}, J. Pure and Applied Algebra \textbf{213}
  (2009), 1075--1085.

\bibitem{BasuChattRao}
R.~Basu, P.~Chattopadhyay, and R.~Rao, \emph{Some remarks on symplectic
  injective stability}, Proc. Amer. Math. Soc. \textbf{139} (2011), no.~7,
  2317--2325.

\bibitem{BasuRao}
R.~Basu and R.~Rao, \emph{Injective stability for {$K_1$} of classical
  modules}, J. Algebra \textbf{323} (2010), no.~4, 867--877.

\bibitem{ChattRao}
P.~Chattopadhyay and R.~Rao, \emph{Elementary symplectic orbits and improved
  {$K_1$}-stability}, J. K-Theory \textbf{7} (2011), no.~2, 389--403.

\bibitem{GMVLGPSp}
F.~Grunewald, J.~Mennicke, and L.~Vaserstein, \emph{On symplectic groups over
  polynomial rings}, Math. Zeitschrift \textbf{206} (1991), 35--56.

\bibitem{HSVZyoga}
R.~Hazrat, A.~Stepanov, N.~Vavilov, and Z.~Zhang, \emph{The yoga of
  commutators}, J. Math. Sci (New-York) \textbf{179} (2011), no.~6, 662--678.

\bibitem{HazVav}
R.~Hazrat and N.~Vavilov, \emph{{$K_1$} of {C}hevalley groups are nilpotent},
  J. Pure and Appl. Algebra \textbf{179} (2003), 99--116.

\bibitem{HVZUniRel}
R.~Hazrat, N.~Vavilov, and Z.~Zhang, \emph{Relative unitary commutator calculus
  and applications}, J.Algebra \textbf{343} (2011), 107--137.

\bibitem{HVZUniMulti}
\bysame, \emph{Multiple commutator formulas for unitary groups}, Preprint
  \texttt{http://arxiv.org/abs/1205.6866v1}, 2012.

\bibitem{HVZrelachev}
\bysame, \emph{Relative commutator calculus in {C}hevalley groups}, J.Algebra
  (2013), to appear.

\bibitem{HazZhangMulti}
R.~Hazrat and Z.~Zhang, \emph{Multiple commutator formulas}, Israel J. Math.
  (2013), to appear.

\bibitem{JoseRao}
S.~Jose and R.~Rao, \emph{A local global principle for the elementary
  unimodular vector group}, Contemp. Math. \textbf{390} (2005), 119--125.

\bibitem{MasonComm2}
A.~W. Mason, \emph{On subgroups of {$\GL(n,A)$} which are generated by
  commutators. \textrm{II}}, J. reine angew. Math. \textbf{322} (1981),
  118--135.

\bibitem{PetrovStavrovaIsotropic}
V.~Petrov and A.~Stavrova, \emph{Elementary subgroups in isotropic reductive
  groups}, St. Petersburg Mathematical Journal \textbf{20} (2009), no.~4,
  625--644.

\bibitem{QuillenSerreConj}
D.~Quillen, \emph{Projective modules over polynomial rings}, Invent. Math.
  \textbf{36} (1976), 167--171.

\bibitem{RaoBasuJose}
R.~Rao, R.~Basu, and S.~Jose, \emph{Injective stability for {$K_1$} of the
  orthogonal group}, J. Algebra \textbf{323} (2010), no.~2, 393--396.

\bibitem{StavSerreConj}
A.~Stavrova, \emph{Homotopy invariance of non-stable {$K_1$}-functors},
  {P}reprint: \texttt{http://ArXiv.org/abs/1111.4664}, 2012.

\bibitem{StepVavLength}
A.~Stepanov and N.~Vavilov, \emph{Length of commutators in {C}hevalley groups},
  Israel J. Math. \textbf{185} (2011), 253--276.

\bibitem{StepUniloc}
A.~V. Stepanov, \emph{Universal localization for {C}hevalley groups over
  rings}, J. {K}-theory (2013), to apppear.

\bibitem{SuslinSerreConj}
A.~A. Suslin, \emph{On the structure of the special linear group over
  polynomial rings}, Math. USSR. Izvestija \textbf{11} (1977), 221--238.

\bibitem{SusKopLGPSO}
A.~A. Suslin and V.~I. Kopejko, \emph{Quadratic modules and orthogonal group
  over polynomial rings}, J. Sov. Math. \textbf{20} (1982), no.~6, 2665--2691.

\bibitem{Taddei}
G.~Taddei, \emph{Normalit{\'e} des groupes {\'el\'e}mentaire dans les groupes
  de {C}hevalley sur un anneau}, Contemp. Math. \textbf{55} (1986), 693--710.

\bibitem{TitsCongruence}
J.~Tits, \emph{Systemes g\'en\'erateurs de groupes de congruences}, C. R. Acad.
  Sci., Paris, S\'er. A \textbf{283} (1976), 693--695.

\bibitem{VasersteinGLn}
L.~N. Vaserstein, \emph{On the normal subgroups of {GL$_n$} over a ring},
  Lecture Notes in Math. \textbf{854} (1981), 456--465.

\bibitem{VasersteinChevalley}
\bysame, \emph{On normal subgroups of {C}hevalley groups over commutative
  rings}, T{\^o}hoku Math. J. \textbf{38} (1986), 219--230.

\bibitem{VavStepCommForm0}
N.~A. Vavilov and A.~V. Stepanov, \emph{Standard commutator formulae}, Vestnik
  St.Petersburg Univ., Math. (2008), no.~1, 9--14.

\bibitem{VavStepCommForm}
\bysame, \emph{Standard commutator formulae, revisited}, Vestnik St.Petersburg
  Univ., Math. \textbf{43} (2010), no.~1, 12--17.

\bibitem{WendtA1}
{Wendt, M.}, \emph{{$\mathbb A^1$}-homotopy of {C}hevalley groups}, J. K-Theory
  \textbf{5} (2010), 245--287.

\end{thebibliography}
\end{document}